\newtheorem{theorem}{Theorem}[section]
\newtheorem{corollary}{Corollary}[section]
\newtheorem{lemma}{Lemma}[section]
\newtheorem{claim}{Claim}[section]
\newtheorem{remark}{Remark}[section]
\def\R{{\mathfrak R}\, }
\def\D{{\mathfrak D}\, }
\def\A{{\mathfrak A}\, }
\begin{document}
\begin{center}{\bf \LARGE $*$-Lie-Jordan-type maps on $C^*$-algebras}\\
\vspace{.2in}
\noindent {\bf Bruno Leonardo Macedo Ferreira}\\
{\it Technological Federal University of Paran\'{a},\\
Avenida Professora Laura Pacheco Bastos, 800,\\
85053-510, Guarapuava, Brazil.}\\
e-mail: brunoferreira@utfpr.edu.br
\\
and
\\
\noindent {\bf Bruno Tadeu Costa}\\
{\it Federal University of Santa Catarina,\\
Rua Jo\~{a}o Pessoa, 2750,\\   
89036-256, Blumenau, Brazil.}\\
e-mail: b.t.costa@ufsc.br

\end{center}
\begin{abstract} 
Let $\A$ and $\A'$ be two  
$C^*$-algebras with identities $I_{\A}$ and $I_{\A'}$, respectively, and $P_1$ and $P_2 = I_{\A} - P_1$ nontrivial projections in $\A$. In this paper we study
the  characterization of multiplicative $*$-Lie-Jordan-type maps, where the notion of these maps arise here. In particular, if $\mathcal{M}_{\A}$ is a von Neumann algebra relative $C^{*}$-algebra $\A$ without central summands of type $I_1$ then every bijective unital multiplicative $*$-Lie-Jordan-type maps are $*$-ring isomorphisms.
\end{abstract}
{\bf {\it AMS 2010 Subject Classification:}} 	17D05, 46L05.\\
{\bf {\it Keywords:}} $C^*$-algebra, multiplicative $*$-Lie-Jordan-type maps\\

\section{Introduction and Preliminaries}

The structures of some Jordan and Lie maps on associative and \linebreak non-associative rings were and has been studied systematically by many people, such as 
\cite{Abdu, Benkovic, BenkovicEremita, Bresar, changd,  FerGur1, FerGur2, Fergur3, Fos, Mar64}. A basic question about maps of general algebras is: when are they additives? See, for instance, 
\cite{chang, changd, Fer, bruth, FerGur1, Mart}. In other words, we are interested in finding out which is the sharp condition for a map to be additive. With this picture in mind, we present
in this paper the definition of a multiplicative $*$-Lie-Jordan-type map.

Many mathematicians devoted themselves to the study of new products on rings. Between them, two of these products on a $*$-ring $\R$ will be of special interest to us: for 
$A, B \in \R$, define by $\left\{A,B\right\}_{*} = AB+BA^{*}$ and $[A, B]_{*} = AB-BA^{*}$ two different kinds of products. Just out of curiosity, the product $[A, B]_{*}$ 
was extensively studied because, by the fundamental theorem of $\check{S}$emrl in \cite{semrl1}, maps of the form $T \mapsto TA - AT^{*}$ naturally arise in the problem of representing quadratic functionals with sesquilinear functionals (see \cite{semrl2, semrl3}).

In view of this broad and active line of research, we shall address the structure of a new class of maps named multiplicative $*$-Lie-Jordan-type maps on $C^*$-algebras, also introducing two different new products that extend $\left\{A,B\right\}_{*}$ and $[A, B]_{*}$. Our main objective is to study the characterization of multiplicative $*$-Lie-Jordan-type maps on $C^*$-algebras. As a consequence, if $\mathcal{M}_{\A}$ is a von Neumann algebra relative $C^{*}$-algebra $\A$ without central summands of type $I_1$ then every bijective unital multiplicative $*$-Lie-Jordan-type maps are $*$-ring isomorphisms.

Let $\mathfrak{R}$ and $\mathfrak{R}'$ be two rings and $\varphi\colon\mathfrak{R}\longrightarrow\mathfrak{R}'$ be a map. 
We say $\varphi$ is \textit{additive} if $\varphi(a+b)=\varphi(a)+\varphi(b)$.

Let $\mathfrak{R}$ be a ring with centre $\mathcal{Z}(\mathfrak{R})$ and $\left[x_1,x_2\right]_{*} = x_1x_2-x_2x_1^{*}$ 
and $\left\{x_1, x_2\right\}_{*} = x_1x_2+x_2x_1^{*}$ denote the usual $*$-Lie product and $*$-Jordan product of $x_1$ and $x_2$, respectively, where $*$ is a involution.
Let us define the following sequence of polynomials: 
$$p_{1*}(x) = x\, \,  \text{and}\, \,  p_{n*}(x_1, x_2, \ldots , x_n) = [p_{(n-1)*}(x_1, x_2, \ldots , x_{n-1}) , x_n]_{*}$$
$$q_{1*}(x) = x\, \,  \text{and}\, \,  q_{n*}(x_1, x_2, \ldots , x_n) = \left\{q_{(n-1)*}(x_1, x_2, \ldots , x_{n-1}) , x_n\right\}_{*},$$
for all integers $n \geq 2$. 
Thus, $p_{2*}(x_1, x_2) = [x_1, x_2]_{*}, \  p_{3*} (x_1, x_2, x_3) = [[x_1, x_2]_{*} , x_3]_{*}, \ q_{2*}(x_1, x_2) = \left\{x_1, x_2\right\}_{*}, \ q_{3*} (x_1, x_2, x_3) = \left\{\left\{x_1, x_2\right\}_{*} , x_3\right\}_{*}$, etc. Note that $p_{2*}$ and $q_{2*}$ are product introduced by Bre$\check{s}$ar and Fo$\check{s}$ner \cite{brefos1, brefos2}. 
Now let us define a new class of maps (not necessarily additive): $\varphi : \R \longrightarrow \R'$ shall be called \textit{multiplicative $*$-Lie-Jordan $n$-map} if
\begin{eqnarray*}\label{ident1}
&&\varphi(p_{n*} (x_1, x_2, . . . ,x_n)) =  p_{n*} (\varphi(x_1), \varphi(x_2), . . . , \varphi(x_i), . . .,\varphi(x_n))
\\&&\varphi(q_{n*} (x_1, x_2, . . . ,x_n)) =  q_{n*} (\varphi(x_1), \varphi(x_2), . . . , \varphi(x_i), . . .,\varphi(x_n))
\end{eqnarray*}
where $n \geq 2$ is a integer.
Multiplicative $*$-Lie-Jordan $2$-map, $*$-Lie-Jordan $3$-map 
and $*$-Lie-Jordan $n$-map are collectively referred to as \textit{multiplicative $*$-Lie-Jordan-type maps}.
In the case $\varphi$ satisfies only
\begin{eqnarray*}\label{ident1}
&&\varphi(p_{n*} (x_1, x_2, . . . ,x_n)) =  p_{n*} (\varphi(x_1), \varphi(x_2), . . . , \varphi(x_i), . . .,\varphi(x_n)),
\end{eqnarray*}
we say that $\varphi$ is a \textit{multiplicative $*$-Lie $n$-map}. On the other hand, if $\varphi$ satisfies only
\begin{eqnarray*}\label{ident1}
\varphi(q_{n*} (x_1, x_2, . . . ,x_n)) =  q_{n*} (\varphi(x_1), \varphi(x_2), . . . , \varphi(x_i), . . .,\varphi(x_n)),
\end{eqnarray*}
we say that $\varphi$ is a \textit{multiplicative $*$-Jordan $n$-map}.
Also multiplicative $*$-Lie $2$-map, $*$-Lie $3$-map, $*$-Lie $n$-map and multiplicative $*$-Jordan $2$-map, $*$-Jordan $3$-map 
and $*$-Jordan $n$-map  are collectively referred to as \textit{multiplicative $*$-Lie-type maps} and \textit{multiplicative $*$-Jordan-type maps}, respectively. 

By a $C^*$-algebra we mean a complete normed complex algebra (say $\A$) endowed with a conjugate-linear algebra involution $*$, 
satisfying $||a^*a||=||a||^2$ for all $a \in \A$. Moreover, a $C^*$-algebra is a {\it prime} $C^*$-algebra if $A\A B = 0$ for 
$A,B \in \A$ implies either $A=0$ or $B=0$.

We find it convenient to invoke the noted Gelfand-Naimark theorem that state a $C^*$-algebra $\A$ is $*$-isomorphic to a
$C^*$-subalgebra $\D \subset \mathcal{B}(\mathcal{H})$, where $\mathcal{H}$ is a Hilbert space. So from now on we shall
consider elements of a $C^*$-algebra as operators.

Let $\mathbb{R}$ and $\mathbb{C}$ denote, respectively, the real field and complex field and for real part and imaginary
part of an operator $T$ we shall use $\Re(T)$ and $\Im(T)$, respectively. 

Let be $P_1$ a nontrivial projection in $\A$ and $P_2 = I_{\A} - P_1$ where $I_{\A}$ is the identity of $\A$. Then $\mathfrak{A}$ has a decomposition
$\mathfrak{A}=\mathfrak{A}_{11}\oplus \mathfrak{A}_{12}\oplus
\mathfrak{A}_{21}\oplus \mathfrak{A}_{22},$ where
$\mathfrak{A}_{ij}=P_{i}\mathfrak{A}P_{j}$ $(i,j=1,2)$.

\begin{remark}\label{obs}
By definition of involution clearly we get $*(\A_{ij}) \subseteq \A_{ji}$ for $i,j \in \left\{1,2\right\}.$
\end{remark}

\section{Multiplicative $*$-Lie-Jordan-type maps}

\subsection{Auxiliary Claims} 

\begin{claim}\label{Claim1} Let $\A$ and $\A'$ be two $C^*$-algebras with identities
and $\varphi$ be a unital multiplicative $*$-Lie-Jordan $n$-map of $\mathfrak{A}$ onto an arbitrary $C^*$-algebra $\mathfrak{A}'$. Then $\varphi(0) = 0$.
\end{claim}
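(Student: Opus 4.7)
The plan is to exploit two elementary vanishing identities for the polynomials $p_{n*}$ together with the surjectivity of $\varphi$. The unital hypothesis is not strictly needed here; only surjectivity matters, and only the Lie part of the hypothesis is used (the Jordan part gives an entirely analogous alternative proof).

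First, I would record, in any $*$-ring and for every $n \geq 2$, the two identities
$$p_{n*}(x_1, x_2, \ldots, x_{n-1}, 0) = 0 \quad\text{and}\quad p_{n*}(0, x_2, \ldots, x_{n-1}, x_n) = 0,$$
valid for arbitrary choices of the remaining arguments. Both follow by a direct induction on $n$: the base case $n=2$ is the trivial computation $p_{2*}(y,0) = y\cdot 0 - 0\cdot y^{*} = 0$ and $p_{2*}(0,y) = 0\cdot y - y\cdot 0^{*} = 0$; the inductive step is immediate from the recursion $p_{n*}(\cdot,\ldots,\cdot) = [p_{(n-1)*}(\cdot,\ldots,\cdot), \,\cdot\,]_{*}$, since either the inner $p_{(n-1)*}$ vanishes by induction or the outer commutator has a zero slot.

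Next, since $\varphi$ is surjective, I can choose some $a \in \mathfrak{A}$ with $\varphi(a) = 0$. Applying $\varphi$ to the vanishing expression $p_{n*}(a, a, \ldots, a, 0) = 0$ from the first identity and invoking the multiplicative $*$-Lie $n$-map property of $\varphi$, one obtains
$$\varphi(0) = \varphi\bigl(p_{n*}(a,\ldots,a,0)\bigr) = p_{n*}\bigl(\varphi(a),\ldots,\varphi(a),\varphi(0)\bigr) = p_{n*}\bigl(0,\ldots,0,\varphi(0)\bigr),$$
and the right-hand side is $0$ by the second identity from the previous step (now applied inside $\mathfrak{A}'$). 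Therefore $\varphi(0) = 0$.

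There is no real obstacle: the argument is purely formal. The only substantive ingredient is surjectivity, which supplies a preimage of $0$; this preimage then propagates through the polynomial $p_{n*}$ to force $\varphi(0)$ to vanish. Should one prefer to bypass surjectivity, an alternative route is to apply the map property to $p_{n*}(I_{\mathfrak{A}}, \ldots, I_{\mathfrak{A}}, 0) = 0$, compute $p_{n*}(I_{\mathfrak{A}'},\ldots,I_{\mathfrak{A}'},\varphi(0)) = 2^{n-2}(\varphi(0) - \varphi(0)^{*})$, and solve the resulting linear system in $\varphi(0)$ and $\varphi(0)^{*}$ (using absence of torsion in a $C^{*}$-algebra); the surjectivity approach above is however shorter.
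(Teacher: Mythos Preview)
Your main argument is correct and complete: the two vanishing identities for $p_{n*}$ are valid, and the surjectivity trick cleanly forces $\varphi(0)=0$.

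The paper takes a different, shorter route. It simply observes that $p_{n*}(I_{\mathfrak A},I_{\mathfrak A},\ldots,I_{\mathfrak A})=0$ (since $[I_{\mathfrak A},I_{\mathfrak A}]_{*}=0$), and then uses the unital hypothesis directly:
\[
\varphi(0)=\varphi\bigl(p_{n*}(I_{\mathfrak A},\ldots,I_{\mathfrak A})\bigr)=p_{n*}\bigl(\varphi(I_{\mathfrak A}),\ldots,\varphi(I_{\mathfrak A})\bigr)=p_{n*}(I_{\mathfrak A'},\ldots,I_{\mathfrak A'})=0.
\]
So the paper's proof uses \emph{unitality} and no surjectivity, whereas yours uses \emph{surjectivity} and no unitality. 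Your version is thus slightly more general (it applies to any surjective multiplicative $*$-Lie $n$-map, unital or not); the paper's is a one-liner given the stated hypotheses.

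One small slip in your closing parenthetical: with the ordering you wrote, $p_{n*}(I_{\mathfrak A'},\ldots,I_{\mathfrak A'},\varphi(0))$ is already $0$, not $2^{n-2}(\varphi(0)-\varphi(0)^{*})$, because $p_{(n-1)*}(I_{\mathfrak A'},\ldots,I_{\mathfrak A'})=0$ for $n\ge 3$ and $[I_{\mathfrak A'},\varphi(0)]_{*}=0$ for $n=2$. The expression $2^{n-2}(\varphi(0)-\varphi(0)^{*})$ is what you get from $p_{n*}(\varphi(0),I_{\mathfrak A'},\ldots,I_{\mathfrak A'})$, i.e.\ with $\varphi(0)$ in the first slot. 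Either way the alternative works without any linear system, so it is even easier than you suggest.
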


\begin{proof}
Let $I_{\A}$ be the identity of $\A$. We have, 
$$\varphi(0) = \varphi(p_{n*} (I_{\A}, I_{\A}, . . . ,I_{\A})) =  p_{n*} (\varphi(I_{\A}), \varphi(I_{\A}), . . . ,\varphi(I_{\A}))= 0,$$
because $\varphi$ is unital. 
\end{proof}

\begin{claim}\label{Claim2} Let $\A$ and $\A'$ be two $C^*$-algebras and $\varphi$ be a multiplicative $*$-Lie-Jordan $n$-map of $\mathfrak{A}$ onto an arbitrary $C^*$-algebra $\mathfrak{A}'$. Let $A,B$ and $H$ be in $\A$ such that $\varphi(H) = \varphi(A) + \varphi(B)$. Then $$\varphi(p_{n*}(H, T_2, ..., T_n)) = \varphi(p_{n*}(A, T_2, ..., T_n)) + \varphi(p_{n*}(B, T_2, ..., T_n))$$ and $$\varphi(q_{n*}(H, T_2, ..., T_n)) = \varphi(q_{n*}(A, T_2, ..., T_n)) + \varphi(q_{n*}(B, T_2, ..., T_n))$$
for all $T_j \in \A$ with $j = 2,...,n$.
\end{claim}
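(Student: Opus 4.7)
The plan is to push the computation through $\varphi$ into $\mathfrak{A}'$, use the fact that the polynomials $p_{n*}$ and $q_{n*}$ are additive in their first argument, and then pull the result back through $\varphi$ by reversing the multiplicative identity. The hypothesis $\varphi(H) = \varphi(A) + \varphi(B)$ is tailor-made to be inserted after the first application of the multiplicative property.

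More precisely, first I would record the auxiliary linearity fact: for any $A, B, T \in \mathfrak{A}$,
\[
[A+B, T]_{*} = (A+B)T - T(A+B)^{*} = [A,T]_{*} + [B,T]_{*},
\]
and likewise $\{A+B, T\}_{*} = \{A,T\}_{*} + \{B,T\}_{*}$, because $*$ is additive. An immediate induction on $n$ using the recursive definitions $p_{n*} = [p_{(n-1)*},\, \cdot\,]_{*}$ and $q_{n*} = \{q_{(n-1)*},\, \cdot\,\}_{*}$ then shows that both $p_{n*}$ and $q_{n*}$ are additive in the first slot, that is,
\[
p_{n*}(A+B, T_2, \ldots, T_n) = p_{n*}(A, T_2, \ldots, T_n) + p_{n*}(B, T_2, \ldots, T_n),
\]
with the same identity for $q_{n*}$. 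This identity also holds in $\mathfrak{A}'$, since the argument only uses that $*$ is an additive involution, which is a feature of any $C^*$-algebra.

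With this in hand I would compute, using the multiplicative $*$-Lie-Jordan $n$-map property of $\varphi$,
\[
\varphi(p_{n*}(H, T_2, \ldots, T_n)) = p_{n*}(\varphi(H), \varphi(T_2), \ldots, \varphi(T_n)),
\]
then substitute the hypothesis $\varphi(H) = \varphi(A) + \varphi(B)$ into the first slot, apply the additivity of $p_{n*}$ in $\mathfrak{A}'$, and finally reinterpret each summand via the multiplicative property again:
\[
p_{n*}(\varphi(A), \varphi(T_2), \ldots, \varphi(T_n)) = \varphi(p_{n*}(A, T_2, \ldots, T_n)),
\]
and similarly for $B$. Adding the two gives the claimed identity for $p_{n*}$, and the argument for $q_{n*}$ is verbatim the same with $\{\cdot,\cdot\}_{*}$ replacing $[\cdot,\cdot]_{*}$.

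There is no real obstacle here; the claim is essentially a bookkeeping consequence of the defining multiplicative identities together with the fact that the two star-products are additive in each argument. The only point worth isolating is the inductive verification of first-slot additivity of $p_{n*}$ and $q_{n*}$, which could be stated once and reused throughout the paper.
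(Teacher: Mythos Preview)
Your proposal is correct and follows essentially the same approach as the paper: apply the multiplicative identity to turn $\varphi(p_{n*}(H,T_2,\ldots,T_n))$ into $p_{n*}(\varphi(H),\varphi(T_2),\ldots,\varphi(T_n))$, substitute $\varphi(H)=\varphi(A)+\varphi(B)$, split by first-slot additivity, and apply the multiplicative identity in reverse. The only difference is that you spell out the inductive verification of first-slot additivity of $p_{n*}$ and $q_{n*}$, which the paper uses without comment.
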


\begin{proof}
Using the definition of $\varphi$ we have
\begin{eqnarray*}
\varphi(p_{n*}(H, T_2, ... , T_{n})) &=& p_{n*}(\varphi(H), \varphi(T_2),..., \varphi(T_{n})) \\&=& p_{n*}(\varphi(A) + \varphi(B), \varphi(T_2),..., \varphi(T_{n})) \\&=& p_{n*}(\varphi(A), \varphi(T_2), ..., \varphi(T_{n})) \\&+& p_{n*}(\varphi(B), \varphi(T_2), ..., \varphi(T_{n}))\\&=& \varphi(p_{n*}(A, T_2,..., T_{n})) \\&+& \varphi(p_{n*}(B, T_2,..., T_{n})).
\end{eqnarray*}
Similarly, we prove 
\begin{eqnarray*}
\varphi(q_{n*}(H, T_2, ..., T_n)) &=& \varphi(q_{n*}(A, T_2, ..., T_n)) \\&+& \varphi(q_{n*}(B, T_2, ..., T_n)).
\end{eqnarray*}
\end{proof}

Note that if 
\begin{eqnarray*}
&&\varphi(p_{n*}(A, T_2, ..., T_{n}))+\varphi(p_{n*}(B, T_2, ..., T_{n})) = \\&&\varphi(p_{n*}(A, T_2, ..., T_{n}) + p_{n*}(B, T_2, ..., T_{n}))
\\ and
\\&&\varphi(q_{n*}(A, T_2, ..., T_{n}))+\varphi(q_{n*}(B, T_2, ..., T_{n})) = \\&&\varphi(q_{n*}(A, T_2, ..., T_{n}) + q_{n*}(B, T_2, ..., T_{n}))
\end{eqnarray*}
then, by the injectivity of $\varphi$, we get
$$p_{n*}(H, T_2, ..., T_{n})=p_{n*}(A, T_2, ..., T_{n}) + p_{n*}(B, T_2, ..., T_{n})$$
and
$$q_{n*}(H, T_2, ..., T_{n})=q_{n*}(A, T_2, ..., T_{n}) + q_{n*}(B, T_2, ..., T_{n}).$$

\begin{claim}\label{Claim3}
Let $\A$ and $\A'$ be two  
$C^*$-algebras with identities $I_{\A}$ and $I_{\A'}$, respectively, and $P_1$ a nontrivial projection in $\A$. If $\varphi: \A \rightarrow \A'$ is a bijective unital multiplicative $*$-Lie-Jordan $n$-map then $\varphi(2^{n-1}\Re(A)) = 2^{n-1}\Re(\varphi(A))$ and $\varphi(2^{n-1}i\Im(A)) = 2^{n-1}i\Im(\varphi(A))$. 
\end{claim}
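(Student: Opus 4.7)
The key observation is that both claimed identities follow from evaluating the iterated $*$-Jordan and $*$-Lie brackets on the tuple $(A, I_\A, \ldots, I_\A)$ and then transporting the identity via $\varphi$, exploiting that $\varphi(I_\A) = I_{\A'}$.

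\textbf{Plan for the real part.} First I would compute, by induction on $n$, that
$$q_{n*}(A, I_\A, I_\A, \ldots, I_\A) = 2^{n-1}\Re(A),$$
where $I_\A$ appears $n-1$ times. For $n=2$, $q_{2*}(A, I_\A) = A + A^{*} = 2\Re(A)$. Assuming $Y_{n-1} := q_{(n-1)*}(A, I_\A, \ldots, I_\A) = 2^{n-2}\Re(A)\cdot \text{(real form)}$, note that $Y_{n-1}$ is self-adjoint for $n \geq 3$, so $Y_n = Y_{n-1} + Y_{n-1}^{*} = 2 Y_{n-1}$, which gives $Y_n = 2^{n-1}\Re(A)$. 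Then, applying the multiplicative $*$-Jordan $n$-map identity and the unital hypothesis $\varphi(I_\A) = I_{\A'}$,
\begin{eqnarray*}
\varphi(2^{n-1}\Re(A)) &=& \varphi(q_{n*}(A, I_\A, \ldots, I_\A)) \\
&=& q_{n*}(\varphi(A), I_{\A'}, \ldots, I_{\A'}) \\
&=& 2^{n-1}\Re(\varphi(A)),
\end{eqnarray*}
the last equality by the same inductive computation in $\A'$.

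\textbf{Plan for the imaginary part.} Analogously, I would show by induction that
$$p_{n*}(A, I_\A, I_\A, \ldots, I_\A) = 2^{n-1} i\,\Im(A).$$
The base case is $p_{2*}(A, I_\A) = A - A^{*} = 2i\,\Im(A)$, and since $Z_{n-1} := p_{(n-1)*}(A, I_\A, \ldots, I_\A)$ is anti-self-adjoint (a scalar multiple of $i\,\Im(A)$) for $n \geq 3$, we get $Z_n = Z_{n-1} - Z_{n-1}^{*} = 2 Z_{n-1}$, so $Z_n = 2^{n-1} i\,\Im(A)$. Then, using the multiplicative $*$-Lie $n$-map identity together with $\varphi(I_\A) = I_{\A'}$, one obtains
$$\varphi(2^{n-1} i\,\Im(A)) = p_{n*}(\varphi(A), I_{\A'}, \ldots, I_{\A'}) = 2^{n-1} i\,\Im(\varphi(A)).$$

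\textbf{Obstacles.} There is essentially no obstacle; the argument is a short direct computation. The only point that needs a moment of care is verifying the self-adjoint/anti-self-adjoint character of the intermediate iterates so that the recursion $Y_n = 2Y_{n-1}$ (resp.\ $Z_n = 2 Z_{n-1}$) really collapses to a factor of two at each step. Neither Claim \ref{Claim1} (which gives $\varphi(0)=0$) nor Claim \ref{Claim2} is needed here; only the unital hypothesis and the defining multiplicative identities for $p_{n*}$ and $q_{n*}$ are used.
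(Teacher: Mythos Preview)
Your proposal is correct and follows essentially the same approach as the paper: both compute $q_{n*}(A,I_\A,\ldots,I_\A)=2^{n-1}\Re(A)$ and $p_{n*}(A,I_\A,\ldots,I_\A)=2^{n-1}i\,\Im(A)$, then apply the defining multiplicative identities together with $\varphi(I_\A)=I_{\A'}$. The only difference is that the paper merely states these two bracket identities, whereas you supply the inductive verification (via the self-adjoint/anti-self-adjoint nature of the intermediate iterates), so your write-up is in fact slightly more detailed.
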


\begin{proof}
Since
$$
p_{n*} (A, I_{\A}, . . . ,I_{\A}) = 2^{n-1}i\Im(A)
$$
and
$$
q_{n*} (A, I_{\A}, . . . ,I_{\A}) = 2^{n-1}\Re(A),
$$
we obtain that
$$
\begin{aligned}
\varphi(2^{n-1}i\Im(A)) &= \varphi(p_{n*} (A, I_{\A}, . . . ,I_{\A})) = p_{n*}(\varphi(A), \varphi(I_{\A}), ... , \varphi(I_{\A})) \\&= 2^{n-1}i\Im(\varphi(A))
\end{aligned}
$$
and
$$
\begin{aligned}
\varphi(2^{n-1}\Re(A)) &= \varphi(q_{n*} (A, I_{\A}, . . . ,I_{\A})) = q_{n*}(\varphi(A), \varphi(I_{\A}), ... , \varphi(I_{\A})) \\&= 2^{n-1}\Re(\varphi(A)).
\end{aligned}
$$

\end{proof}

\section{Main theorem}

We shall prove as follows the main result of this paper:

\begin{theorem}\label{mainthm} 
Let $\A$ and $\A'$ be two  
$C^*$-algebras with identities $I_{\A}$ and $I_{\A'}$, respectively, and $P_1$, $P_2 = I_{\A} - P_1$ nontrivial projections in $\A$. 
Suppose that $\A'$ satisfies:
\begin{eqnarray*}
  \left(\dagger\right) \ \ \ \mbox{If} \ \ \  X \A' \varphi(P_i) = \left\{0\right\} \ \ \  \mbox{implies} \ \ \ X = 0.
\end{eqnarray*}
Then $\varphi: \A \rightarrow \A'$ is a bijective unital multiplicative $*$-Lie-Jordan $n$-map if and only if $\varphi$ is a
$*$-ring isomorphism.
\end{theorem}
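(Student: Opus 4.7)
The reverse direction is immediate: any $*$-ring isomorphism preserves sum, product and involution, and hence preserves every iterated product $p_{n*}$ and $q_{n*}$.  For the forward direction I plan four stages: (1)~transport the Peirce decomposition of $\A$ to $\A'$; (2)~prove additivity of $\varphi$; (3)~prove $*$-preservation; (4)~prove multiplicativity.  For Stage~1, writing $Q_i := \varphi(P_i)$, unitality gives $Q_1 + Q_2 = I_{\A'}$ and Claim~\ref{Claim3} with $A = P_1$ forces $\Im Q_1 = 0$, so each $Q_i$ is self-adjoint.  Applying $\varphi$ to the identity $q_{n*}(P_1, P_1, I_\A, \ldots, I_\A) = 2^{n-1} P_1$ and comparing with $q_{n*}(Q_1, Q_1, I_{\A'}, \ldots, I_{\A'}) = 2^{n-1} Q_1^2$ then yields $Q_1^2 = Q_1$, making $Q_1, Q_2$ complementary orthogonal projections inducing $\A' = \bigoplus_{i,j} Q_i \A' Q_j$; a short argument using Claim~\ref{Claim2} and $(\dagger)$ shows $\varphi(\A_{ij}) \subseteq Q_i \A' Q_j$.

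\emph{Stage~2 (additivity, the main obstacle).}  For $A, B \in \A$, surjectivity provides $H \in \A$ with $\varphi(H) = \varphi(A) + \varphi(B)$; the target is $H = A + B$.  Claim~\ref{Claim2} supplies, for every $T_2, \ldots, T_n \in \A$, the identities
\begin{align*}
\varphi(p_{n*}(H, T_2, \ldots, T_n)) &= \varphi(p_{n*}(A, T_2, \ldots, T_n)) + \varphi(p_{n*}(B, T_2, \ldots, T_n)),\\
\varphi(q_{n*}(H, T_2, \ldots, T_n)) &= \varphi(q_{n*}(A, T_2, \ldots, T_n)) + \varphi(q_{n*}(B, T_2, \ldots, T_n)).
\end{align*}
The strategy is to specialize the $T_j$ among $P_1$, $P_2$, $I_\A$ and elements of the individual Peirce blocks $\A_{k\ell}$ and, after carefully tracking the involution that nests at every level of the iterated product, to reduce the vanishing of each Peirce component of $H - A - B$ to statements of the form $X \cdot \A' \cdot Q_i = 0$, whence $(\dagger)$ delivers $X = 0$.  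I would first prove additivity when $A, B$ lie in a single Peirce block $\A_{ij}$ (splitting into the diagonal $i = j$ and off-diagonal $i \neq j$ cases) and then assemble the general case via $A = \sum_{i,j} A_{ij}$.  Controlling the nested involutions across the full depth of $p_{n*}$ and $q_{n*}$ is the technical heart of the proof.

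\emph{Stages~3 and 4.}  With additivity in hand, combining Claim~\ref{Claim3} with $\varphi(2^{n-1} X) = 2^{n-1} \varphi(X)$ (from additivity) gives $\varphi(\Re A) = \Re \varphi(A)$ and $\varphi(i \Im A) = i \Im \varphi(A)$; hence $\varphi(A^*) = \varphi(\Re A - i \Im A) = \Re \varphi(A) - i \Im \varphi(A) = \varphi(A)^*$, establishing $*$-preservation.  Once additivity, $*$-preservation and the block structure are all in place, multiplicativity reduces to checking $\varphi(A_{ij} B_{jk}) = \varphi(A_{ij}) \varphi(B_{jk})$ on single Peirce blocks; this follows from specializing $p_{n*}$ and $q_{n*}$ to arguments built from $A_{ij}$, $B_{jk}$ and $I_\A$, unwinding the iterated products using $*$-preservation, and combining the resulting equations to isolate $\varphi(A_{ij}) \varphi(B_{jk})$.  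Extension to arbitrary $A, B$ by additivity then completes the proof that $\varphi$ is a $*$-ring isomorphism.
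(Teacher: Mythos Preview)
Your overall architecture---Peirce decomposition, block-wise additivity with $(\dagger)$ handling the diagonal blocks, then $*$-preservation, then multiplicativity---matches the paper's proof closely.  The paper proceeds through Lemmas~\ref{lema4}--\ref{lema8} (cross-block and off-diagonal additivity using only the test elements $P_1,P_2$), then Lemma~\ref{lema9} ($\varphi(AP_i)=\varphi(A)\varphi(P_i)$, which yields that $Q_i$ is a projection), then Lemma~\ref{lema11} ($\varphi(\A_{ij})=\A'_{ij}$), then Lemma~\ref{lema12} (diagonal additivity via $(\dagger)$), and finally assembles additivity, $*$-preservation and multiplicativity exactly as you sketch.

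Where your plan diverges is the \emph{order} in Stage~1, and this creates a real gap.  The claim ``unitality gives $Q_1+Q_2=I_{\A'}$'' is the statement $\varphi(P_1)+\varphi(P_2)=\varphi(P_1+P_2)$, which is an instance of the additivity you have not yet proved; in the paper it is obtained only after Lemma~\ref{lema8}.  Likewise, the assertion that ``a short argument using Claim~\ref{Claim2} and $(\dagger)$ shows $\varphi(\A_{ij})\subseteq Q_i\A'Q_j$'' is optimistic: without any additivity you can extract relations such as $\varphi(A_{12})Q_1=0$ from $p_{n*}(A_{12},P_1,I_\A,\ldots,I_\A)=0$, but pinning down the left factor $Q_1\varphi(A_{12})=\varphi(A_{12})$ leads to equations involving $\varphi(2^{n-2}A_{12})$ that you cannot yet relate to $\varphi(A_{12})$.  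In the paper this containment (Lemma~\ref{lema11}) is derived from Lemma~\ref{lema9}, whose proof explicitly invokes the partial additivity of Lemmas~\ref{lema4}--\ref{lema8}.  The fix is simply to swap the order: prove the cross-block and off-diagonal additivity lemmas first (they need only Claims~\ref{Claim1}--\ref{Claim3} and injectivity, not the projection structure on the $Q_i$), and then carry out your Stage~1.  Finally, for Stage~4 the paper bypasses the block-by-block verification: once additivity and $*$-preservation are in hand, applying the $n$-map property with $I_\A$ in all but two slots yields $\varphi(AB\pm BA^*)=\varphi(A)\varphi(B)\pm\varphi(B)\varphi(A)^*$, and adding gives $\varphi(AB)=\varphi(A)\varphi(B)$ directly.
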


We prove only the necessary condition because clearly sufficient condition hold true.

The following lemmas have the same hypotheses as the Theorem \ref{mainthm} and we need them for the proof of this theorem. Thus, let us consider $P_{1}$ a  nontrivial projection of $\mathfrak{A}$ and $P_2 = I_{\A} - P_1$. 

\begin{lemma}\label{lema4} For any $A_{ii} \in \A_{ii}$, $B_{ij} \in \A_{ij}$, with $i \neq j$ we have 
$$\varphi(A_{ii} + B_{ij}) = \varphi(A_{ii}) + \varphi(B_{ij})$$
and
$$\varphi(A_{ii} + B_{ji}) = \varphi(A_{ii}) + \varphi(B_{ji}).$$
\end{lemma}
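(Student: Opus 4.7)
The plan is to use the surjectivity of $\varphi$ to find a preimage $T \in \A$ with $\varphi(T) = \varphi(A_{ii}) + \varphi(B)$, where $B$ is $B_{ij}$ (first identity) or $B_{ji}$ (second identity), and then force $T = A_{ii} + B$ by pinning down the Peirce blocks of $T$ through Claim \ref{Claim2}. Writing $T = T_{11} + T_{12} + T_{21} + T_{22}$, Claim \ref{Claim2} gives, for every $X_2,\ldots,X_n \in \A$,
$$\varphi(p_{n*}(T, X_2, \ldots, X_n)) = \varphi(p_{n*}(A_{ii}, X_2, \ldots, X_n)) + \varphi(p_{n*}(B, X_2, \ldots, X_n)),$$
and the analog for $q_{n*}$. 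The strategy is to choose the test tuple so that at least one right-hand summand vanishes; Claim \ref{Claim1} ($\varphi(0)=0$) and injectivity of $\varphi$ then identify $p_{n*}(T,X_2,\ldots)$ or $q_{n*}(T,X_2,\ldots)$ exactly, and a Peirce-block comparison yields linear constraints on the $T_{kl}$.

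For the first identity ($B = B_{ij}$), the tuples $(P_j, I_{\A}, \ldots, I_{\A})$ and $(P_i, I_{\A}, \ldots, I_{\A})$ separate the two terms cleanly. Since $A_{ii}P_j = P_j A_{ii}^* = 0$, the first choice kills the $A_{ii}$ term; using $B_{ij}P_j = B_{ij}$, $P_j B_{ij}^* = B_{ij}^*$, and the observation that subsequent brackets with $I_{\A}$ act as multiplication by $2$ on skew-adjoint (for $p$) or self-adjoint (for $q$) elements, one obtains $p_{n*}(B_{ij}, P_j, I_{\A}, \ldots) = 2^{n-2}(B_{ij} - B_{ij}^*)$ and $q_{n*}(B_{ij}, P_j, I_{\A}, \ldots) = 2^{n-2}(B_{ij} + B_{ij}^*)$. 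Comparing Peirce blocks with $p_{n*}(T, P_j, I_{\A}, \ldots) = 2^{n-2}[(T_{jj} - T_{jj}^*) + T_{ij} - T_{ij}^*]$ and the $q$-analog forces $T_{jj} = 0$ and $T_{ij} = B_{ij}$. Symmetrically, $(P_i, I_{\A}, \ldots, I_{\A})$ kills $B_{ij}$ (since $B_{ij}P_i = P_i B_{ij}^* = 0$) and yields $T_{ji} = 0$ and $T_{ii} = A_{ii}$. Assembly gives $T = A_{ii} + B_{ij}$.

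For the second identity ($B = B_{ji}$), no single projection separates the two terms: $X_2 = P_j$ now kills \emph{both} summands (since also $B_{ji}P_j = P_j B_{ji}^* = 0$), while $X_2 = P_i$ keeps both. The plan splits into three steps: (a) from $X_2 = P_j$ deduce $p_{n*}(T, P_j, I_{\A}, \ldots) = q_{n*}(T, P_j, I_{\A}, \ldots) = 0$, forcing $T_{jj} = 0$ and $T_{ij} = 0$; (b) use the longer tuple $(P_i, P_j, I_{\A}, \ldots, I_{\A})$, which kills $A_{ii}$ at the second bracket (because $(A_{ii} - A_{ii}^*)P_j = P_j(A_{ii} - A_{ii}^*)^* = 0$) while yielding $p_{n*}(B_{ji}, P_i, P_j, I_{\A}, \ldots) = 2^{n-3}(B_{ji} - B_{ji}^*)$ and analogously $q_{n*}(B_{ji}, P_i, P_j, I_{\A}, \ldots) = 2^{n-3}(B_{ji} + B_{ji}^*)$, which together pin down $T_{ji} = B_{ji}$; (c) for the remaining block $T_{ii}$, substitute $T_{ji} = B_{ji}$ into the Claim \ref{Claim2} equations from $X_2 = P_i$ (both $p$ and $q$) and combine them with the already-proved first identity (for $\A_{ii} + \A_{ij}$-additivity) and Claim \ref{Claim3} (for real- and imaginary-part compatibility) to rewrite the right-hand two-$\varphi$ sums as single $\varphi$-values; injectivity then forces $T_{ii} - A_{ii}$ to be both self- and skew-adjoint, hence zero.

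The main obstacle is step (c): converting a two-$\varphi$ sum $\varphi(X) + \varphi(Y)$ on the right of the Claim \ref{Claim2} identity into a single $\varphi(Z)$, so that injectivity can be applied to pin down the remaining Peirce block $T_{ii}$. The bridging uses the first identity together with Claim \ref{Claim3}, which cover just enough partial additivity of $\varphi$ to close the argument. Apart from this, the proof is routine Peirce-block bookkeeping built on the doubling behavior of brackets with $I_{\A}$ on (skew-)self-adjoint elements.
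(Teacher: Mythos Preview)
Your first-identity argument is correct and essentially matches the paper's proof; the only difference is cosmetic (you use the test tuple $(P_k, I_{\A}, \ldots, I_{\A})$ where the paper uses $(P_k, \ldots, P_k)$, and both yield the same Peirce-block constraints). The paper in fact proves only the case $i=1$, $j=2$ of the first identity and dismisses the remaining cases, including the second identity, as ``similar'', so you are already going further than the paper in attempting a separate argument for $\varphi(A_{ii}+B_{ji})$.

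That attempt, however, has a genuine gap. Step (b) relies on the test tuple $(P_i, P_j, I_{\A}, \ldots, I_{\A})$, which occupies two test slots and hence requires $n \geq 3$; for $n=2$ there is no analogue of step (b) at all. More seriously, step (c) is circular for every $n$. After steps (a) and (b) you have $T = T_{ii} + B_{ji}$. Feeding this into Claim~\ref{Claim2} with $(P_i, I_{\A}, \ldots, I_{\A})$ and then applying Claim~\ref{Claim3} to each term gives only
\[
2^{n-1}i\,\Im(\varphi(T)) \;=\; 2^{n-1}i\,\Im(\varphi(A_{ii})) + 2^{n-1}i\,\Im(\varphi(B_{ji})),
\]
which is an immediate consequence of the defining relation $\varphi(T) = \varphi(A_{ii}) + \varphi(B_{ji})$ and says nothing about $T_{ii}$; the $q$-version is equally vacuous. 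Nor does the first identity let you collapse the right-hand two-$\varphi$ sum into a single $\varphi$-value: it supplies additivity only across $\A_{ii} \oplus \A_{ij}$, whereas the summand $2^{n-2}(B_{ji} - B_{ji}^{*})$ lives in $\A_{ij} \oplus \A_{ji}$, not in $\A_{ij}$ alone, so injectivity cannot be invoked. As written, nothing in your sketch forces $T_{ii} = A_{ii}$, and the second identity remains unproved.
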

\begin{proof}
We shall only prove the case $i=1$, $j=2$ because the demonstration of the other cases are similar. 
By surjectivity of $\varphi$ there exist $H = H_{11} + H_{12} + H_{21} + H_{22} \in \A$ such that $\varphi(H) = \varphi(A_{11}) + \varphi(B_{12})$. Applying the Claims \ref{Claim1} and \ref{Claim2} we have
\begin{eqnarray*}
\varphi(p_{n*}(H,P_1,...,P_1)) &=& \varphi(p_{n*}(A_{11},P_1,...,P_1)) + \varphi(p_{n*}(B_{12},P_1,...,P_1)) \\&=& \varphi(p_{n*}(A_{11},P_1,...,P_1)). 
\end{eqnarray*}
Since $\varphi$ is injective, we get $p_{n*}(H,P_1,...,P_1) = p_{n*}(A_{11},P_1,...,P_1)$, that is, $2^{n-2}(H_{11} - H^{*}_{11}) + H_{21} - H^{*}_{21} = 2^{n-2}(A_{11} - A^{*}_{11})$. Thus $\Im(H_{11}) = \Im(A_{11})$ and $\Im(H_{21}) = 0$. By similar way we have
\begin{eqnarray*}
\varphi(q_{n*}(H,P_1,...,P_1)) &=& \varphi(q_{n*}(A_{11},P_1,...,P_1)) + \varphi(q_{n*}(B_{12},P_1,...,P_1)) \\&=& \varphi(q_{n*}(A_{11},P_1,...,P_1)). 
\end{eqnarray*}
Again by injectivity of $\varphi$ follows that $q_{n*}(H,P_1,...,P_1)= q_{n*}(A_{11},P_1,...,P_1)$, that is, $2^{n-2}(H_{11} + H^{*}_{11}) + H_{21} + H^{*}_{21} = 2^{n-2}(A_{11} + A^{*}_{11})$. 
Hence $\Re(H_{11}) = \Re(A_{11})$ and  $\Re(H_{21}) = 0$ that implies $H_{11} = A_{11}$ and $H_{21} = 0$.
Finally, applying again Claims \ref{Claim1} and \ref{Claim2} for $P_2$ we have
\begin{eqnarray*}
\varphi(p_{n*}(H,P_2,...,P_2)) &=& \varphi(p_{n*}(A_{11},P_2,...,P_2)) + \varphi(p_{n*}(B_{12},P_2,...,P_2)) \\&=& \varphi(p_{n*}(B_{12},P_2,...,P_2)). 
\end{eqnarray*}
Since $\varphi$ is injective, we get $p_{n*}(H,P_2,...,P_2) = p_{n*}(B_{12},P_2,...,P_2)$, that is, $2^{n-2}(H_{22} - H^{*}_{22}) + H_{12} - H^{*}_{12} = B_{12} - B^{*}_{12}$. Thus $\Im(H_{12}) = \Im(B_{12})$ and $\Im(H_{22}) = 0$. Similarly, we obtain $q_{n*}(H,P_2,...,P_2) = q_{n*}(B_{12},P_2,...,P_2)$, that is, $2^{n-2}(H_{22} + H^{*}_{22}) + H_{12} + H^{*}_{12} = B_{12} + B^{*}_{12}$. Hence $\Re(H_{12}) = \Re(B_{12})$ and $\Re(H_{22}) = 0$ which implies $H_{12} = B_{12}$, $H_{22} = 0$ and
$H=A_{11} + B_{12}$. 

\end{proof}

\begin{lemma}\label{lema5}
For any $A_{12} \in \A_{12}$ and $B_{21} \in \A_{21}$, we have $\varphi(A_{12} + B_{21}) = \varphi(A_{12}) + \varphi(B_{21})$.
\end{lemma}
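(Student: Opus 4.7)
The plan is to imitate the proof of Lemma~\ref{lema4} almost verbatim, exploiting the fact that testing $H$ against repeated copies of $P_1$ and then of $P_2$ selects only the ``off-diagonal'' corners of $H$ in a controlled fashion. By surjectivity of $\varphi$, pick $H=H_{11}+H_{12}+H_{21}+H_{22}\in\A$ with $\varphi(H)=\varphi(A_{12})+\varphi(B_{21})$; the goal is to show $H_{11}=H_{22}=0$, $H_{12}=A_{12}$, $H_{21}=B_{21}$.

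First I would apply Claim~\ref{Claim2} with $T_2=\cdots=T_n=P_1$. A quick inductive computation (identical to the one implicit in Lemma~\ref{lema4}) gives the formulas
$$p_{n*}(X,P_1,\ldots,P_1)=2^{n-2}(X_{11}-X_{11}^*)+X_{21}-X_{21}^*,$$
$$q_{n*}(X,P_1,\ldots,P_1)=2^{n-2}(X_{11}+X_{11}^*)+X_{21}+X_{21}^*,$$
valid for every $X=\sum X_{ij}\in\A$. Plugging $X=A_{12}$ yields $0$ in both cases, and by Claim~\ref{Claim1} we have $\varphi(0)=0$. Therefore Claim~\ref{Claim2} collapses to
$$\varphi(p_{n*}(H,P_1,\ldots,P_1))=\varphi(p_{n*}(B_{21},P_1,\ldots,P_1)),$$
and likewise for $q_{n*}$. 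Injectivity of $\varphi$ then gives
$$2^{n-2}(H_{11}-H_{11}^*)+H_{21}-H_{21}^*=B_{21}-B_{21}^*,$$
$$2^{n-2}(H_{11}+H_{11}^*)+H_{21}+H_{21}^*=B_{21}+B_{21}^*.$$
Splitting each equation along the Peirce decomposition (the $\A_{11}$-part, the $\A_{21}$-part and the $\A_{12}$-part are independent) one reads off $\Re(H_{11})=\Im(H_{11})=0$ and $H_{21}=B_{21}$, hence $H_{11}=0$ and $H_{21}=B_{21}$.

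Repeating the same argument with $P_2$ in place of $P_1$ gives the symmetric formulas $p_{n*}(X,P_2,\ldots,P_2)=2^{n-2}(X_{22}-X_{22}^*)+X_{12}-X_{12}^*$ and similarly for $q_{n*}$. Now it is $B_{21}$ that contributes $0$ and $A_{12}$ that contributes the nonzero term; the analogous two equations together with Peirce-component matching yield $H_{22}=0$ and $H_{12}=A_{12}$. Assembling the four pieces gives $H=A_{12}+B_{21}$, and hence $\varphi(A_{12}+B_{21})=\varphi(H)=\varphi(A_{12})+\varphi(B_{21})$.

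There is no real obstacle here: the proof is structurally identical to Lemma~\ref{lema4}, only with the roles of the ``diagonal'' summand $A_{ii}$ and an off-diagonal one swapped for two off-diagonal summands sitting in opposite corners. The only minor care needed is the bookkeeping that $p_{n*}(A_{12},P_1,\ldots,P_1)=0$ (because both the $\A_{11}$- and $\A_{21}$-components of $A_{12}$ vanish), which is precisely why testing against $P_1$ kills $A_{12}$ and isolates information about $B_{21}$, and vice versa for $P_2$; this is what makes the Peirce components of $H$ decouple cleanly.
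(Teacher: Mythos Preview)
Your proof is correct and follows essentially the same approach as the paper's own argument: pick $H$ by surjectivity, test against repeated copies of $P_1$ (using Claims~\ref{Claim1} and~\ref{Claim2} and injectivity) to pin down $H_{11}=0$ and $H_{21}=B_{21}$, then against repeated copies of $P_2$ to get $H_{22}=0$ and $H_{12}=A_{12}$. The only cosmetic difference is that you read off the off-diagonal equalities directly from the Peirce components, whereas the paper phrases the same step via real and imaginary parts.
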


\begin{proof}
Let $H = H_{11} + H_{12} + H_{21} + H_{22} \in \A$ be such that $\varphi(H) = \varphi(A_{12}) + \varphi(B_{21})$.  Applying the Claims \ref{Claim1} and \ref{Claim2} we have
\begin{eqnarray*}
\varphi(p_{n*}(H,P_1,...,P_1)) &=& \varphi(p_{n*}(A_{12},P_1,...,P_1)) + \varphi(p_{n*}(B_{21},P_1,...,P_1)) \\&=& \varphi(p_{n*}(B_{21},P_1,...,P_1)). 
\end{eqnarray*}  
Since $\varphi$ is injective, we get $p_{n*}(H,P_1,...,P_1) = p_{n*}(B_{21},P_1,...,P_1)$, that is, $2^{n-2}(H_{11} - H^{*}_{11}) + H_{21} - H^{*}_{21} = B_{21} - B^{*}_{21}$. Thus $\Im(H_{11}) = 0$ and $\Im(H_{21}) = \Im(B_{21})$.
Similarly, we can obtain $\Re(H_{11}) = 0$ and $\Re(H_{21}) = \Re(B_{21})$ by applying 
\begin{eqnarray*}
\varphi(q_{n*}(H,P_1,...,P_1)) &=& \varphi(q_{n*}(A_{12},P_1,...,P_1)) + \varphi(q_{n*}(B_{21},P_1,...,P_1)) \\&=& \varphi(q_{n*}(B_{21},P_1,...,P_1)). 
\end{eqnarray*}
Now, we apply
\begin{eqnarray*}
\varphi(p_{n*}(H,P_2,...,P_2)) &=& \varphi(p_{n*}(A_{12},P_2,...,P_2)) + \varphi(p_{n*}(B_{21},P_2,...,P_2)) \\&=& \varphi(p_{n*}(A_{12},P_2,...,P_2)). 
\end{eqnarray*} 
Again since $\varphi$ is injective, we get $p_{n*}(H,P_2,...,P_2) = p_{n*}(A_{12},P_2,...,P_2)$, that is, $2^{n-2}(H_{22} - H^{*}_{22}) + H_{12} - H^{*}_{12} = A_{12} - A^{*}_{12}$. Thus $\Im(H_{22}) = 0$ and $\Im(H_{12}) = \Im(A_{12})$.
And the similar way we get $\Re(H_{22}) = 0$ and $\Re(H_{12}) = \Re(A_{12})$ by applying
\begin{eqnarray*}
\varphi(q_{n*}(H,P_2,...,P_2)) &=& \varphi(q_{n*}(A_{12},P_2,...,P_2)) + \varphi(q_{n*}(B_{21},P_2,...,P_2)) \\&=& \varphi(q_{n*}(A_{12},P_2,...,P_2)). 
\end{eqnarray*}
Therefore $H = A_{12} + B_{21}$ and the Lemma hold true.
 
\end{proof}

\begin{lemma}\label{lema6}
For any $A_{ij}, B_{ij} \in \A_{ij}$ with $i \neq j$, we have $\varphi(A_{ij} + B_{ij}) = \varphi(A_{ij}) + \varphi(B_{ij})$.
\end{lemma}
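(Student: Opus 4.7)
My plan is to mimic the strategy of Lemmas~\ref{lema4} and \ref{lema5}, focusing on the case $i=1$, $j=2$ (the case $i=2$, $j=1$ is symmetric). By surjectivity of $\varphi$ I would take $H = H_{11}+H_{12}+H_{21}+H_{22}$ with $\varphi(H) = \varphi(A_{12}) + \varphi(B_{12})$ and then pin down the blocks of $H$ until I can conclude $H = A_{12}+B_{12}$ by injectivity. First, to kill $H_{11}$ and $H_{21}$, I apply Claim~\ref{Claim2} with $T_{2}=\cdots=T_{n}=P_{1}$: both $p_{n*}(A_{12},P_{1},\ldots,P_{1})$ and $q_{n*}(A_{12},P_{1},\ldots,P_{1})$ vanish since $A_{12}P_{1}=0=P_{1}A_{12}^{*}$, and similarly for $B_{12}$, so by Claim~\ref{Claim1} and injectivity I get $p_{n*}(H,P_{1},\ldots,P_{1}) = q_{n*}(H,P_{1},\ldots,P_{1}) = 0$; reading off the block decompositions forces $H_{11}=0$ and $H_{21}=0$, so $H = H_{12} + H_{22}$.

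To eliminate $H_{22}$ I would iterate Claim~\ref{Claim2}. Set $K_{p} := p_{n*}(H,P_{2},\ldots,P_{2}) = 2^{n-2}(H_{22}-H_{22}^{*}) + (H_{12}-H_{12}^{*})$; by Claim~\ref{Claim2}, $\varphi(K_{p}) = \varphi(A_{12}-A_{12}^{*}) + \varphi(B_{12}-B_{12}^{*})$. The key observation is that both $A_{12}-A_{12}^{*}$ and $B_{12}-B_{12}^{*}$ are \emph{fixed points} of the operator $X \mapsto p_{n*}(X,P_{1},\ldots,P_{1})$, since their $\A_{11}$-parts vanish and the $\A_{12}\oplus\A_{21}$-parts reproduce under iterated $[\,\cdot\,,P_{1}]_{*}$. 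Applying Claim~\ref{Claim2} a second time with this operator to the pair $(A_{12}-A_{12}^{*},\,B_{12}-B_{12}^{*})$ therefore produces $\varphi(p_{n*}(K_{p},P_{1},\ldots,P_{1})) = \varphi(K_{p})$, and injectivity gives $p_{n*}(K_{p},P_{1},\ldots,P_{1}) = K_{p}$. A direct computation of the left-hand side returns $H_{12}-H_{12}^{*}$ (the $\A_{22}$-part is killed), so equating with $K_{p}$ forces $2^{n-2}(H_{22}-H_{22}^{*}) = 0$; the parallel $q_{n*}$-argument yields $2^{n-2}(H_{22}+H_{22}^{*}) = 0$, and together these give $H_{22}=0$ and $H = H_{12} \in \A_{12}$.

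It remains to prove $H = A_{12}+B_{12}$. Here I would apply Claim~\ref{Claim2} with $T_{2}=A_{12}$, $T_{3}=\cdots=T_{n}=P_{1}$: the identity $p_{n*}(A_{12},A_{12},P_{1},\ldots,P_{1}) = 0$ (which follows from $A_{12}^{2}=0$ together with the fact that $A_{12}A_{12}^{*}$ is self-adjoint and hence killed by each subsequent bracket with $P_{1}$) collapses one summand on the right, so injectivity yields the bilinear identity $HA_{12}^{*} - A_{12}H^{*} = B_{12}A_{12}^{*} - A_{12}B_{12}^{*}$. Setting $G := H - A_{12} - B_{12} \in \A_{12}$, this rearranges to $GA_{12}^{*} = A_{12}G^{*}$; exchanging the roles of $A_{12}$ and $B_{12}$ yields $GB_{12}^{*} = B_{12}G^{*}$, and the parallel $q_{n*}$-computation produces the corresponding $+$-versions. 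The hard part will be deducing $G=0$ from these bilinear-type relations: I expect to need to run the same construction with the second slot ranging over a large enough test family in $\A$ and then invoke hypothesis $(\dagger)$ on $\A'$---via the $n$-multiplicative identities for $\varphi$---to show $\varphi(G)\A'\varphi(P_{i}) = \{0\}$, which forces $\varphi(G)=0$ and hence $G=0$ by injectivity.
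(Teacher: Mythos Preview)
Your first two steps are sound and, in fact, Step~2 is a neat self-contained alternative to the paper's route: iterating Claim~\ref{Claim2} with $P_1$ on the output of the $P_2$-step kills $H_{22}$ without appealing to any earlier additivity lemma, whereas the paper handles $H_{jj}$ and $H_{ij}$ in one stroke by invoking Lemma~\ref{lema5}.

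The gap is Step~3. The paper's argument here is simply to combine
\[
\varphi(A_{ij}-A_{ij}^{*})+\varphi(B_{ij}-B_{ij}^{*})
\]
into a single $\varphi$-value via Lemma~\ref{lema5} (each summand lives in $\A_{ij}\oplus\A_{ji}$), after which injectivity immediately yields $\Im(H_{ij})=\Im(A_{ij})+\Im(B_{ij})$, and the $q_{n*}$-version gives the real parts. You never invoke Lemma~\ref{lema5}, and your proposed substitute does not close. First, the ``collapse one summand'' trick with $T_{2}=A_{12}$ only works because $A_{12}A_{12}^{*}$ is self-adjoint; for the parallel $q_{n*}$-computation one gets $q_{n*}(A_{12},A_{12},P_{1},\ldots,P_{1})=2^{n-2}A_{12}A_{12}^{*}\neq 0$, so no summand collapses and injectivity cannot be applied. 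Second, even on the $p_{n*}$-side, the vanishing $p_{n*}(A_{12},A_{12},P_{1},\ldots,P_{1})=0$ requires at least one trailing $P_{1}$, i.e.\ $n\geq 3$; for $n=2$ the term is $-A_{12}A_{12}^{*}$. Third, the hope of ``ranging $T_{2}$ over a large test family'' does not help: Claim~\ref{Claim2} gives you an identity in $\A'$ of the form $\varphi(\cdots)=\varphi(\cdots)+\varphi(\cdots)$, and you can only pull this back to $\A$ via injectivity when one of the two $\varphi$-terms on the right is known to vanish---which happens only for $T_{2}\in\{A_{12},B_{12}\}$. That leaves you with just the two relations $GA_{12}^{*}=A_{12}G^{*}$ and $GB_{12}^{*}=B_{12}G^{*}$, which do not force $G=0$. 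Finally, invoking $(\dagger)$ at this stage is circular: $(\dagger)$ is a condition on $\A'$, and turning your relations in $\A$ into the required $\varphi(G)\A'\varphi(P_{i})=\{0\}$ would need multiplicativity of $\varphi$, which is not yet available (it is only proved at the very end of Theorem~\ref{mainthm}, \emph{after} additivity).

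In short: keep your Steps~1--2, but for Step~3 follow the paper and use Lemma~\ref{lema5} to merge the two $\varphi$-terms on the right; the bilinear-plus-$(\dagger)$ route does not work here.
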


\begin{proof}
Let $H = H_{11} + H_{12} + H_{21} + H_{22} \in \A$ be such that $\varphi(H) = \varphi(A_{ij}) + \varphi(B_{ij})$.  Applying the Claims \ref{Claim1} and \ref{Claim2} we have
\begin{eqnarray*}
\varphi(p_{n*}(H,P_i,...,P_i)) &=& \varphi(p_{n*}(A_{ij},P_i,...,P_i)) + \varphi(p_{n*}(B_{ij},P_i,...,P_i)) \\&=& \varphi(0) \\&=& 0. 
\end{eqnarray*}  
Hence $H_{ii} + H_{ji} - H^{*}_{ii} - H^{*}_{ji} = 0$ and $\Im(H_{ii}) = \Im(H_{ji}) = 0$. Similarly, using
\begin{eqnarray*}
\varphi(q_{n*}(H,P_i,...,P_i)) &=& \varphi(q_{n*}(A_{ij},P_i,...,P_i)) + \varphi(q_{n*}(B_{ij},P_i,...,P_i)) \\&=& \varphi(0) \\&=& 0, 
\end{eqnarray*}
we can obtain $\Re(H_{ii}) = \Re(H_{ji}) = 0$. Therefore $H_{ii} = H_{ji} = 0$.
Now we apply Claim \ref{Claim2} and Lemma \ref{lema5} to get
\begin{eqnarray*}
\varphi(p_{n*}(H,P_j,...,P_j)) &=&  \varphi(p_{n*}(A_{ij},P_j,...,P_j)) + \varphi(p_{n*}(B_{ij},P_j,...,P_j)) \\&=& \varphi(A_{ij} - A^{*}_{ij}) + \varphi(B_{ij} - B^{*}_{ij}) \\&=& \varphi(2\Im(A_{ij})) + \varphi(2\Im(B^{*}_{ij})) \\&=& \varphi(2\Im(A_{ij}) + 2\Im(B^{*}_{ij})) \\&=& \varphi(2\Im(A_{ij}) + 2\Im(B_{ij})).
\end{eqnarray*}
So $H_{ij} + H_{jj} - H^{*}_{ij} - H^{*}_{jj} = 2(\Im(A_{ij}) + \Im(B_{ij}))$. Hence $\Im(H_{ij}) = \Im(A_{ij}) + \Im(B_{ij})$ and $\Im(H_{jj}) = 0$.
Similarly, we can obtain $\Re(H_{ij}) = \Re(A_{ij}) + \Re(B_{ij})$ and $\Re(H_{jj}) = 0$. Therefore $H_{ij} = A_{ij} + B_{ij}$.
\end{proof}

\begin{lemma}\label{lema8}
For any $A_{11} \in \A_{11}$, $B_{12} \in \A_{12}$, $C_{21} \in \A_{21}$, $D_{22} \in \A_{22}$, we have 
$$\varphi(A_{11} + B_{12} + C_{21} + D_{22}) = \varphi(A_{11}) + \varphi(B_{12}) + \varphi(C_{21}) + \varphi(D_{22}).$$
\end{lemma}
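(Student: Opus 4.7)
My plan is to follow the template of Lemmas~\ref{lema4}--\ref{lema6}. By surjectivity of $\varphi$, first choose $H=H_{11}+H_{12}+H_{21}+H_{22}\in\A$ with $\varphi(H)=\varphi(A_{11})+\varphi(B_{12})+\varphi(C_{21})+\varphi(D_{22})$; I will then recover each block $H_{ij}$ by testing $H$ against $p_{n*}(\,\cdot\,,P_k,\ldots,P_k)$ and $q_{n*}(\,\cdot\,,P_k,\ldots,P_k)$ for $k=1,2$. Iterating Claim~\ref{Claim2} three times (to handle four summands instead of two) converts $\varphi(p_{n*}(H,P_1,\ldots,P_1))$ into the sum of the four terms $\varphi(p_{n*}(X,P_1,\ldots,P_1))$ for $X\in\{A_{11},B_{12},C_{21},D_{22}\}$, and a direct block computation of exactly the kind performed inside the previous lemmas gives $p_{n*}(A_{11},P_1,\ldots,P_1)=2^{n-2}(A_{11}-A_{11}^{*})$ and $p_{n*}(C_{21},P_1,\ldots,P_1)=C_{21}-C_{21}^{*}$, whereas the $B_{12}$ and $D_{22}$ contributions vanish because those summands sit in the ``wrong'' corner relative to $P_1$.

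The next step is to collapse the surviving right-hand side $\varphi(2^{n-2}(A_{11}-A_{11}^{*}))+\varphi(C_{21}-C_{21}^{*})$ into a single image $\varphi\bigl(2^{n-2}(A_{11}-A_{11}^{*})+C_{21}-C_{21}^{*}\bigr)$. My route is first to apply Lemma~\ref{lema5} to rewrite $\varphi(C_{21}-C_{21}^{*})=\varphi(-C_{21}^{*})+\varphi(C_{21})$, and then to absorb the resulting $\A_{12}$- and $\A_{21}$-summands into the $\A_{11}$-term by successive applications of Lemma~\ref{lema4} in its forms $\A_{ii}+\A_{ij}$ and $\A_{ii}+\A_{ji}$. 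Once the right-hand side is a single $\varphi$-image, injectivity of $\varphi$ delivers the operator identity $p_{n*}(H,P_1,\ldots,P_1)=2^{n-2}(A_{11}-A_{11}^{*})+C_{21}-C_{21}^{*}$; expanding the left-hand side blockwise as $2^{n-2}(H_{11}-H_{11}^{*})+(H_{21}-H_{21}^{*})$ and matching components separately in $\A_{11}$ and $\A_{21}$ yields $\Im(H_{11})=\Im(A_{11})$ and $H_{21}=C_{21}$, while the parallel run with $q_{n*}$ supplies $\Re(H_{11})=\Re(A_{11})$, so $H_{11}=A_{11}$.

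Repeating the same analysis with $P_2$ in place of $P_1$ then pins down $H_{22}=D_{22}$ and $H_{12}=B_{12}$, hence $H=A_{11}+B_{12}+C_{21}+D_{22}$ as required. The main obstacle I expect is the collapsing step above: after Lemma~\ref{lema5} the three surviving $\varphi$-summands sit on the three distinct corners $\A_{11},\A_{12},\A_{21}$, whereas Lemmas~\ref{lema4} and~\ref{lema5} only combine two corners at a time. The delicate bookkeeping is to order the pairwise combinations so that after each merge the accumulated single $\varphi$-argument still pairs with the next summand under one of the available two-corner lemmas; if that cannot be arranged from Lemmas~\ref{lema4}--\ref{lema6} alone, the natural remedy is a short auxiliary three-corner additivity step, provable by the same $p_{n*}$/$q_{n*}$ probing technique used throughout the section.
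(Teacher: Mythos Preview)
Your proposal is correct and follows essentially the same route as the paper: choose $H$ by surjectivity, probe with $p_{n*}(\,\cdot\,,P_k,\ldots,P_k)$ and $q_{n*}(\,\cdot\,,P_k,\ldots,P_k)$ for $k=1,2$, collapse the surviving $\varphi$-sums via the earlier lemmas, and recover each block of $H$ by injectivity. The paper is terser at the collapsing step---it simply cites Lemma~\ref{lema4} to pass from $\varphi(2^{n-2}(A_{11}-A_{11}^{*}))+\varphi(C_{21}-C_{21}^{*})$ to a single $\varphi$-image---whereas you correctly flag that this implicitly involves a three-corner merge and outline exactly the right auxiliary argument to justify it.
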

\begin{proof}
By surjectivity of $\varphi$ there exist $H = H_{11} + H_{12} + H_{21} + H_{22} \in \A$ such that $\varphi(H) = \varphi(A_{11}) + \varphi(B_{12}) + \varphi(C_{21}) + \varphi(D_{22})$. Applying the Claims \ref{Claim1}, \ref{Claim2} and Lemma \ref{lema4} we have
\begin{eqnarray*}
\varphi(p_{n*}(H,P_1,...,P_1)) &=& \varphi(p_{n*}(A_{11},P_1,...,P_1)) + \varphi(p_{n*}(B_{12},P_1,...,P_1)) \\&+& \varphi(p_{n*}(C_{21},P_1,...,P_1)) + \varphi(p_{n*}(D_{22},P_1,...,P_1)) \\&=& \varphi(2^{n-2}(A_{11} - A^{*}_{11})) + \varphi(C_{21} - C^{*}_{21}) \\&=& \varphi(2^{n-2}(A_{11} - A^{*}_{11}) + (C_{21} - C^{*}_{21}))
\end{eqnarray*}
Since $\varphi$ is injective, we get $2^{n-2}(H_{11} - H^{*}_{11}) + H_{21} - H^{*}_{21} = 2^{n-2}(A_{11} - A^{*}_{11}) + C_{21} - C^{*}_{21}$. Thus,
$\Im(H_{11}) = \Im(A_{11})$ and $\Im(H_{21}) = \Im(C_{21})$. We can obtain $\Re(H_{11}) = \Re(A_{11})$ and $\Re(H_{21}) = \Re(C_{21})$ by using 
\begin{eqnarray*}
\varphi(q_{n*}(H,P_1,...,P_1)) &=& \varphi(q_{n*}(A_{11},P_1,...,P_1)) + \varphi(q_{n*}(B_{12},P_1,...,P_1)) \\&+& \varphi(q_{n*}(C_{21},P_1,...,P_1)) + \varphi(q_{n*}(D_{22},P_1,...,P_1)).
\end{eqnarray*} 
Now for $P_2$ and Lemma \ref{lema4} it follows that
\begin{eqnarray*}
\varphi(p_{n*}(H,P_2,...,P_2)) &=& \varphi(p_{n*}(A_{11},P_2,...,P_2)) + \varphi(p_{n*}(B_{12},P_2,...,P_2)) \\&+& \varphi(p_{n*}(C_{21},P_2,...,P_2)) + \varphi(p_{n*}(D_{22},P_2,...,P_2)) \\&=& \varphi(B_{12} - B^{*}_{12}) + \varphi(2^{n-2}(D_{22} - D^{*}_{22})) \\&=& \varphi(B_{12} - B^{*}_{12} + 2^{n-2}(D_{22} - D^{*}_{22})),
\end{eqnarray*}
which implies $2^{n-2}(H_{22} - H^{*}_{22}) + H_{12} - H^{*}_{12} = B_{12} - B^{*}_{12} + 2^{n-2}(D_{22} - D^{*}_{22})$. Therefore $\Im(H_{12}) = \Im(B_{12})$ and $\Im(H_{22}) = \Im(D_{22})$. Similarly, we can obtain  $\Re(H_{12}) = \Re(B_{12})$ and $\Re(H_{22}) = \Re(D_{22})$ by using
\begin{eqnarray*}
\varphi(q_{n*}(H,P_2,...,P_2)) &=& \varphi(q_{n*}(A_{11},P_2,...,P_2)) + \varphi(q_{n*}(B_{12},P_2,...,P_2)) \\&+& \varphi(q_{n*}(C_{21},P_2,...,P_2)) + \varphi(q_{n*}(D_{22},P_2,...,P_2)).
\end{eqnarray*}
Thus, we proved that $H = A_{11} + B_{12} + C_{21} + D_{22}$.
\end{proof}



\begin{lemma}\label{lema9}
We have $\varphi(AP_i)= \varphi(A)\varphi(P_i)$ and $\varphi(P_i A) = \varphi(P_i)\varphi(A)$ for $i \in \left\{1,2\right\}$.
\end{lemma}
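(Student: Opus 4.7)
Plan: Both equalities are of the form $\varphi(XY)=\varphi(X)\varphi(Y)$ for special factors, so the strategy is to use surjectivity to produce $H\in\A$ with $\varphi(H)$ equal to the right-hand side, and then identify $H$ by feeding it into the preservation relations against sequences built from $P_1, P_2$. The Peirce expansions $p_{n*}(X,P_i,\ldots,P_i)=2^{n-2}(X_{ii}-X_{ii}^*)+(X_{ji}-X_{ji}^*)$ and its $q$-analogue (already exploited in Lemma~\ref{lema4}) will let me read off the blocks of $H$.

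Before starting, I need that $\varphi(P_i)$ is a self-adjoint projection in $\A'$ and that $\varphi(P_1)+\varphi(P_2)=I_{\A'}$. Self-adjointness follows from Claim~\ref{Claim3}: since $\Im(P_i)=0$, we get $\Im(\varphi(P_i))=0$. Complementarity follows from Lemma~\ref{lema8} applied to $P_1+P_2=I_{\A}$, giving $\varphi(P_1)+\varphi(P_2)=I_{\A'}$. For idempotence, $\varphi$-preservation applied to $q_{n*}(P_i,\ldots,P_i)=2^{n-1}P_i$ yields $\varphi(P_i)^n=\varphi(P_i)$ (using Claim~\ref{Claim3} on the left side), while applying it to $q_{n*}(P_j,P_i,\ldots,P_i)=0$ and expanding $\varphi(P_j)=I_{\A'}-\varphi(P_i)$ yields $\varphi(P_i)^{n-1}=\varphi(P_i)^n$; together these force $\varphi(P_i)^2=\varphi(P_i)$, so $\varphi(P_i)\varphi(P_j)=0$ for $i\ne j$.

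For $\varphi(AP_i)=\varphi(A)\varphi(P_i)$, pick $H\in\A$ with $\varphi(H)=\varphi(A)\varphi(P_i)$, and write $E=\varphi(P_i)$, $Y=\varphi(A)$. Since $E^2=E$ and $E^*=E$, the right-sided absorbing identities $p_{n*}(YE,E,\ldots,E)=p_{n*}(Y,E,\ldots,E)$ and $q_{n*}(YE,E,\ldots,E)=q_{n*}(Y,E,\ldots,E)$ hold by induction on $n$: at the base step $(YE)E=YE$ and $E(YE)^*=EY^*$ absorb the extra $E$; the inductive step is automatic because the same iteration is then applied to equal things. Applying preservation and injectivity gives $p_{n*}(H,P_i,\ldots,P_i)=p_{n*}(A,P_i,\ldots,P_i)$ and its $q$-analogue in $\A$; reading off Peirce components forces $H_{ii}=A_{ii}$ and $H_{ji}=A_{ji}$. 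Testing with $P_j$ ($j\ne i$): since $EF=0$ for $F=\varphi(P_j)$, $(YE)F=0$ and $F(YE)^*=0$, so $p_{n*}(YE,F,\ldots,F)=q_{n*}(YE,F,\ldots,F)=0$; preservation and injectivity yield $p_{n*}(H,P_j,\ldots,P_j)=q_{n*}(H,P_j,\ldots,P_j)=0$, forcing $H_{ij}=H_{jj}=0$. Assembling the four blocks, $H=A_{ii}+A_{ji}=AP_i$.

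For $\varphi(P_iA)=\varphi(P_i)\varphi(A)$ I argue analogously by picking $H'$ with $\varphi(H')=\varphi(P_i)\varphi(A)$. The main obstacle I expect is asymmetry: the absorbing identity used above is right-sided and has no immediate mirror on the left, so the direct analogue of the first-part argument does not go through. My plan is to leverage the first part (already proved), which supplies $\varphi(X)\varphi(P_k)=\varphi(XP_k)$ for every $X\in\A$, to rewrite the quantities $EYE$, $EY^*E$, $EYF$, $FY^*E$ appearing on the $\A'$ side of the tests $(H',P_k,\ldots,P_k)$ in terms of $\varphi$-images already controlled via Lemma~\ref{lema8} and Claim~\ref{Claim3}. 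Pinning down each Peirce block of $H'$ by testing with both $P_i$ and $P_j$, and invoking the non-degeneracy hypothesis~$(\dagger)$ if necessary to dispose of any residual ambiguity, yields $H'=P_iA$.
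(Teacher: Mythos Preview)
Your argument for $\varphi(AP_i)=\varphi(A)\varphi(P_i)$ is correct but differs substantially from the paper's. The paper inserts the identity into the test sequence, using $q_{n*}(A,P_i,I_\A,\ldots,I_\A)=2^{n-2}(AP_i+P_iA^*)$ and the matching $p_{n*}$-identity: applying $\varphi$ to both and \emph{adding} isolates $2^{n-1}\varphi(A)\varphi(P_i)$, and the other side is then rewritten as $2^{n-1}\varphi(AP_i)$ via Lemmas~\ref{lema5},~\ref{lema6},~\ref{lema8} and Claim~\ref{Claim3}. Notably the paper does not need $\varphi(P_i)$ to be a projection beforehand --- that is recorded as a corollary afterwards. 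Your route (first proving $\varphi(P_i)$ is a self-adjoint idempotent, then exploiting the absorbing identities $p_{n*}(YE,E,\ldots,E)=p_{n*}(Y,E,\ldots,E)$ and $p_{n*}(YE,F,\ldots,F)=0$ to reduce to the block-reading of Lemma~\ref{lema4}) is sound and more structural, at the cost of the extra preliminary step.

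The second half of your proposal is not a proof, and the gap is real. Testing $H'$ with $\varphi(H')=EY$ against $(P_i,\ldots,P_i)$ produces on the $\A'$ side terms such as $EYE-EY^*E$; to identify these as $\varphi$-images you would need control over \emph{left} multiplication by $E$, which is exactly the statement under proof --- the first part only supplies $\varphi(X)E=\varphi(XP_i)$. The condition~$(\dagger)$ is no rescue here (the paper invokes it only in Lemma~\ref{lema12}). A workable fix within your framework: once the first part is in hand, show $\varphi(\A_{kl})\subseteq Q_k\A' Q_l$ directly. For $k\neq l$, Claim~\ref{Claim3} together with Lemma~\ref{lema5} gives $\varphi(B_{kl}^*)=\varphi(B_{kl})^*$, and since $\varphi(B_{kl}^*)\in\A'Q_k$ by the first part, one gets $\varphi(B_{kl})\in Q_k\A'$. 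For $k=l$, write $\varphi(B_{kk})=X_{kk}+X_{jk}$ in the Peirce decomposition of $\A'$ (with $j\ne k$); by Claim~\ref{Claim3}, $2^{n-2}(\varphi(B_{kk})+\varphi(B_{kk})^*)=\varphi(2^{n-1}\Re(B_{kk}))$ lies in $\A'Q_k$ by the first part, and its $\A'Q_j$-component is $2^{n-2}X_{jk}^*$, forcing $X_{jk}=0$. Then $\varphi(P_iA)=\varphi(A_{ii})+\varphi(A_{ij})=Q_i\varphi(A)$ follows immediately from Lemma~\ref{lema8}.
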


\begin{proof}
Since
$$
q_{n*}(A,P_i, I_\A, ..., I_\A) = 2^{n-2}(AP_i + P_iA^*)
$$
and
$$
p_{n*}(A,P_i, I_\A, ..., I_\A) = 2^{n-2}(AP_i - P_iA^*)
$$
we have
\begin{eqnarray*}
\varphi(2^{n-2}(AP_i + P_iA^*))&=&\varphi(q_{n*}(A,P_i, I_\A, ..., I_\A)) \\&=& q_{n*}(\varphi(A),\varphi(P_i), \varphi(I_\A), ..., \varphi(I_\A)) \\&=& 2^{n-2}(\varphi(A)\varphi(P_i) + \varphi(P_i)\varphi(A)^{*})
\end{eqnarray*}
and
\begin{eqnarray*}
\varphi(2^{n-2}(AP_i - P_iA^*))&=&\varphi(p_{n*}(A,P_i, I_\A, ..., I_\A)) \\&=& p_{n*}(\varphi(A),\varphi(P_i), \varphi(I_\A), ..., \varphi(I_\A)) \\&=& 2^{n-2}(\varphi(A)\varphi(P_i) - \varphi(P_i)\varphi(A)^{*}).
\end{eqnarray*}
Adding these two equations we obtain

\begin{eqnarray*}
2^{n-1}\varphi(A)\varphi(P_i) &=& \varphi(2^{n-2}(AP_i + P_iA^*)) + \varphi(2^{n-2}(AP_i - P_iA^*)) \\&=& \varphi(2^{n-2}(A_{ii} + A_{ji} + A^*_{ii} + A^*_{ji})) \\&+&
\varphi(2^{n-2}(A_{ii} + A_{ji} - A^*_{ii} - A^*_{ji})) \\&=& \varphi(2^{n-2}(A_{ii} + A^{*}_{ii})) + \varphi(2^{n-2}(A_{ji} + A^{*}_{ji})) \\&+& \varphi(2^{n-2}(A_{ii} - A^{*}_{ii})) + \varphi(2^{n-2}(A_{ji} - A^{*}_{ji})) \\&=& \varphi(2^{n-1}\Re(A_{ii})) + \varphi(2^{n-1}i\Im(A_{ii})) \\&+& \varphi(2^{n-2}A_{ji})+ \varphi(2^{n-2}A^{*}_{ji}) + \varphi(2^{n-2}A_{ji}) - \varphi(2^{n-2}A^{*}_{ji}) \\&=& 2^{n-1}\Re(\varphi(A_{ii})) + 2^{n-1}i\Im(\varphi(A_{ii})) \\&+& \varphi(2^{n-1}A_{ji}) \\&=& 2^{n-1} \varphi(A_{ii} + A_{ji}) \\&=& 2^{n-1} \varphi(AP_{i}).
\end{eqnarray*}
Similarly we get $\varphi(P_i A) = \varphi(P_i)\varphi(A)$.

\end{proof}

Observe that the previous lemma ensure that if $P_i$ is a projection, then $\varphi(P_i)$ and $\varphi^{-1}(P_i)$ are projection too. Indeed,
$$
\varphi(P_i)^2 = \varphi(P_i)\varphi(P_i) = \varphi(P_iP_i) = \varphi(P_i).
$$ 
Since $\varphi^{-1}$ has the same properties of $\varphi$, we conclude that $\varphi^{-1}$ also preserve projections.

Before we prove the next Lemma note that, by Lemma \ref{lema8} and $\varphi(P_i) = Q_i$ for $i \in \left\{1,2\right\}$,  $\A' = \A'_{11} + \A'_{12} + \A'_{21} + \A'_{22}$ where $\A'_{ij} = Q_i \A' Q_j$, $i,j \in \left\{1,2\right\}$.

\begin{lemma}\label{lema11}
$\varphi(\A_{ij}) = \A'_{ij}$.
\end{lemma}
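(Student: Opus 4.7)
The plan is to prove the two inclusions $\varphi(\A_{ij}) \subseteq \A'_{ij}$ and $\A'_{ij} \subseteq \varphi(\A_{ij})$ separately. The forward direction is essentially a direct application of Lemma \ref{lema9}, while the reverse direction will require Lemma \ref{lema8} together with a clean direct-sum decomposition of $\A'$.

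For the forward inclusion, given $A_{ij} \in \A_{ij}$ we have $A_{ij} = P_i A_{ij} = A_{ij} P_j$. Applying Lemma \ref{lema9} on both sides,
$$\varphi(A_{ij}) = \varphi(P_i A_{ij}) = \varphi(P_i)\varphi(A_{ij}) = Q_i\varphi(A_{ij}),$$
and similarly $\varphi(A_{ij}) = \varphi(A_{ij})Q_j$. Hence $\varphi(A_{ij}) = Q_i\varphi(A_{ij})Q_j \in Q_i \A' Q_j = \A'_{ij}$.

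Before attacking the reverse inclusion, I would upgrade the remark preceding the lemma to the statement that $\A'$ decomposes as a \emph{direct} sum of the $\A'_{ij}$. This reduces to checking that $Q_1$ and $Q_2$ are orthogonal projections summing to $I_{\A'}$. Orthogonality is immediate from Lemma \ref{lema9} and Claim \ref{Claim1}: $Q_iQ_j = \varphi(P_i)\varphi(P_j) = \varphi(P_iP_j) = \varphi(0) = 0$ for $i\neq j$. The completeness $Q_1 + Q_2 = I_{\A'}$ follows from applying Lemma \ref{lema8} to $I_\A = P_1 + P_2$ (viewing $P_1 \in \A_{11}$ and $P_2 \in \A_{22}$, with the two off-diagonal entries equal to zero) and using the unitality of $\varphi$:
$$I_{\A'} = \varphi(I_\A) = \varphi(P_1) + \varphi(P_2) = Q_1 + Q_2.$$

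With the direct sum in hand, the reverse inclusion is straightforward. Given $T \in \A'_{ij}$, surjectivity of $\varphi$ yields $A \in \A$ with $\varphi(A) = T$; decomposing $A = A_{11} + A_{12} + A_{21} + A_{22}$ and invoking Lemma \ref{lema8} gives
$$T = \varphi(A) = \varphi(A_{11}) + \varphi(A_{12}) + \varphi(A_{21}) + \varphi(A_{22}),$$
where each summand $\varphi(A_{rs})$ lies in $\A'_{rs}$ by the forward inclusion already proven. Since $T \in \A'_{ij}$ and the decomposition of $\A'$ is direct, projecting both sides onto the $(i,j)$ component (i.e.\ multiplying by $Q_i$ on the left and $Q_j$ on the right) yields $T = \varphi(A_{ij}) \in \varphi(\A_{ij})$. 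The main obstacle of the lemma is the bookkeeping needed to justify the direct-sum decomposition of $\A'$, since the preceding remark only asserts a sum; once that is secured, the argument is a clean two-line application of Lemmas \ref{lema8} and \ref{lema9}.
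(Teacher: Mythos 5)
Your proof is correct, but your reverse inclusion takes a genuinely different route from the paper's. The paper never invokes Lemma \ref{lema8} or any direct-sum structure: given $B_{ij} \in \A'_{ij}$ with $\varphi(A) = B_{ij}$, it simply writes $B_{ij} = Q_i B_{ij} Q_j = \varphi(P_i)\varphi(A)\varphi(P_j)$ and applies Lemma \ref{lema9} twice to collapse this to $\varphi(P_i A P_j) \in \varphi(\A_{ij})$ --- a one-line argument in which the multiplicativity relations with projections do all the work, with no need for additivity on Peirce components or for the orthogonality/completeness of $Q_1, Q_2$ beyond idempotency (noted in the observation after Lemma \ref{lema9}). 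Your approach instead routes through Lemma \ref{lema8} and a full verification that $Q_1, Q_2$ are orthogonal projections summing to $I_{\A'}$, so that $\A' = \bigoplus_{r,s} \A'_{rs}$ is a genuine Peirce decomposition, and then projects the identity $T = \sum_{r,s}\varphi(A_{rs})$ onto the $(i,j)$ component. What the paper's argument buys is economy --- it is shorter and needs fewer prior results; what yours buys is that it rigorously upgrades the remark preceding the lemma (which only asserts $\A' = \A'_{11} + \A'_{12} + \A'_{21} + \A'_{22}$ as a sum) to a direct sum, a fact the paper leans on later without spelling it out. Your checks are all sound: orthogonality via $Q_iQ_j = \varphi(P_iP_j) = \varphi(0) = 0$ (Lemma \ref{lema9} and Claim \ref{Claim1}), completeness via unitality and Lemma \ref{lema8}, and the projection step using $Q_r\varphi(A_{rs})Q_s = \varphi(A_{rs})$ from your forward inclusion. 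One small observation: the paper's first sentence, $\varphi(\A_{ij}) = \varphi(P_i)\varphi(\A)\varphi(P_j) = Q_i\A'Q_j$ (using surjectivity, $\varphi(\A) = \A'$), already yields the full equality in one stroke, so even the explicit reverse-inclusion paragraph is, strictly speaking, a re-derivation; your two-inclusion structure is the more conservative but also the more laborious reading of the same facts.
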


\begin{proof}
Firstly, observe that $\varphi(\A_{ij}) = \varphi(P_i)\varphi(\A)\varphi(P_j) = \varphi(P_i)\A'\varphi(P_j) \subseteq A'_{ij}$ hold true because $\varphi$ is surjective and by Lemma \ref{lema9}.
Now, let be $B_{ij} \in \A'_{ij}$. By surjectivity of $\varphi$ there exist $A \in \A$ such that $\varphi(A) = B_{ij}$. So by Lemma \ref{lema9} $B_{ij} = Q_i B_{ij} Q_j = \varphi(P_i) \varphi(A) \varphi(P_j) = \varphi(P_i A P_j)$.
   
\end{proof}

\begin{lemma}\label{lema12}
For every $A_{ii}, B_{ii} \in \A_{ii}$, we have $\varphi(A_{ii} + B_{ii}) = \varphi(A_{ii}) + \varphi(B_{ii})$ for $i \in \left\{1,2\right\}.$

\end{lemma}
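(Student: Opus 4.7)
I would argue in three stages.

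First, by surjectivity pick $H\in\A$ with $\varphi(H)=\varphi(A_{11})+\varphi(B_{11})$; since the right-hand side lies in $\A'_{11}$, Lemma~\ref{lema11} immediately places $H$ in $\A_{11}$.

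Second, I would show $HT_{12}=(A_{11}+B_{11})T_{12}$ for every $T_{12}\in\A_{12}$. Using $T_{12}X^{*}=0$ for $X\in\A_{11}$, a short induction on the trailing $P_{2}$'s gives $q_{n*}(X,T_{12},P_{2},\ldots,P_{2})=XT_{12}$ when $n=2$ and $=XT_{12}+T_{12}^{*}X^{*}$ when $n\geq 3$, with $XT_{12}\in\A_{12}$ and $T_{12}^{*}X^{*}\in\A_{21}$. Feeding this into Claim~\ref{Claim2} with $X\in\{H,A_{11},B_{11}\}$, splitting each $\varphi(XT_{12}+T_{12}^{*}X^{*})$ additively via Lemma~\ref{lema5}, and matching $\A'_{12}$-components (using the Peirce decomposition of $\A'$, valid by Lemma~\ref{lema11}), I obtain $\varphi(HT_{12})=\varphi(A_{11}T_{12})+\varphi(B_{11}T_{12})$. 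Lemma~\ref{lema6} rewrites the right-hand side as $\varphi((A_{11}+B_{11})T_{12})$, and injectivity of $\varphi$ yields the claim. Setting $C:=H-A_{11}-B_{11}\in\A_{11}$, this says $CT_{12}=0$ for all $T_{12}\in\A_{12}$.

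Third, I would invoke $(\dagger)$ to annihilate $C$. Since also $T_{12}^{*}C^{*}=(CT_{12})^{*}=0$, we have $q_{n*}(C,T_{12},P_{2},\ldots,P_{2})=0$ identically. Applying $\varphi$ and expanding this in $\A'$, the $\A'_{12}$-component collapses to $\varphi(C)\varphi(T_{12})=0$ for every $T_{12}\in\A_{12}$. Because $\varphi(\A_{12})=\varphi(P_{1})\A'\varphi(P_{2})$ by Lemma~\ref{lema11} and $\varphi(C)\varphi(P_{1})=\varphi(C)$, this becomes $\varphi(C)\A'\varphi(P_{2})=\{0\}$, so $(\dagger)$ forces $\varphi(C)=0$ and hence $C=0$ by injectivity, giving $H=A_{11}+B_{11}$. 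The case $i=2$ is symmetric.

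The hard part will be the third stage: the consequence $C\A_{12}=0$ from stage two is an equation in $\A$, whereas $(\dagger)$ is a primeness-type condition on $\A'$, so the $q_{n*}$-manoeuvre with $(C,T_{12},P_{2},\ldots,P_{2})$ is what bridges the two and re-expresses the vanishing in the precise multiplicative shape $\varphi(C)\A'\varphi(P_{2})=0$ that the hypothesis can handle.
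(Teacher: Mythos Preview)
Your argument is correct. All three stages go through as written: the Peirce localization of $H$ via Lemma~\ref{lema11}, the $q_{n*}$-computation with trailing $P_{2}$'s (your case split $n=2$ versus $n\geq 3$ is accurate), the component-matching in $\A'_{12}$, and the final bridge to $(\dagger)$ using $\varphi(C)Q_{1}=\varphi(C)$ are all sound. One small point you use implicitly is that $Q_{i}=\varphi(P_{i})$ is self-adjoint, so that $*(\A'_{kl})\subseteq\A'_{lk}$; this follows from Claim~\ref{Claim3} applied to $P_{i}$ (since $\Im(P_{i})=0$ forces $\Im(\varphi(P_{i}))=0$), but it is worth making explicit.

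The paper's route is different and shorter. It never introduces the preimage $H$ and never computes $q_{n*}(X,T_{12},P_{2},\ldots,P_{2})$; instead it exploits Lemma~\ref{lema9} directly. For arbitrary $H_{ji}\in\A_{ji}$ one writes $\varphi(H_{ji})=\varphi(P_{j})\varphi(HP_{i})$, so that right-multiplying $\varphi(P_{i}A+P_{i}B)$ by $\varphi(H_{ji})$ pulls a $\varphi(P_{j})$ through via Lemma~\ref{lema9}:
\[
\varphi(P_{i}A+P_{i}B)\varphi(H_{ji})=\varphi\big((P_{i}A+P_{i}B)P_{j}\big)\varphi(HP_{i})=\big(\varphi(P_{i}AP_{j})+\varphi(P_{i}BP_{j})\big)\varphi(HP_{i}),
\]
where the additivity step is Lemma~\ref{lema6}. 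Undoing Lemma~\ref{lema9} again gives $\big(\varphi(P_{i}A+P_{i}B)-\varphi(P_{i}A)-\varphi(P_{i}B)\big)\A'_{ji}=0$, and $(\dagger)$ finishes; one more application of Lemma~\ref{lema9} specializes to $A_{ii},B_{ii}$. So the paper gets the $\A'$-side identity in one stroke, whereas you first pass to $\A$ (obtaining $C\A_{12}=0$) and then need a second $q_{n*}$-manoeuvre to return to $\A'$ where $(\dagger)$ lives. Your approach is more in the spirit of Lemmas~\ref{lema4}--\ref{lema8} and avoids direct reliance on the multiplicativity Lemma~\ref{lema9} in the main computation; the paper's approach is more economical precisely because Lemma~\ref{lema9} is already available.
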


\begin{proof}
By Lemma \ref{lema6}, \ref{lema9} and \ref{lema11} we have
\begin{eqnarray*}
\varphi(P_iA + P_iB)\varphi(H_{ji}) &=& \varphi(P_iA + P_iB)\varphi(P_j)\varphi(HP_i) \\&=& \varphi(P_iAP_j + P_iBP_j)\varphi(HP_i) \\&=& (\varphi(P_iAP_j) + \varphi(P_iBP_j))\varphi(HP_i) \\&=& (\varphi(P_iA) + \varphi(P_iB))\varphi(P_j)\varphi(HP_i) \\&=& (\varphi(P_iA) + \varphi(P_iB))\varphi(H_{ji}).  
\end{eqnarray*}
Hence $(\varphi(P_iA + P_iB) - (\varphi(P_iA) + \varphi(P_iB)))Q_j\A'Q_i = 0$ and by assumption ($\dagger$) of Theorem \ref{mainthm} we get 
$\varphi(P_iA + P_iB) = \varphi(P_iA) + \varphi(P_iB)$. 
Again using Lemma \ref{lema9} we obtain $\varphi(P_iAP_i + P_iBP_i) = \varphi(P_iA + P_iB)\varphi(P_i) = (\varphi(P_iA) + \varphi(P_iB))\varphi(P_i) = \varphi(P_iAP_i) + \varphi(P_iBP_i)$.
\end{proof}

\vspace{0,5cm}

We are ready to prove our Theorem \ref{mainthm}:

\vspace{0,5cm}

\noindent Proof of Theorem. Let be $A, B \in \A$ with $A= A_{11} + A_{12} + A_{21} + A_{22}$ and $B= B_{11} + B_{12} + B_{21} + B_{22}$. By previous Lemmas we obtain
\begin{eqnarray*}
\varphi(A + B) &=& \varphi(A_{11} + A_{12} + A_{21} + A_{22} + B_{11} + B_{12} + B_{21} + B_{22})
\\&=& \varphi((A_{11} + B_{11}) + (A_{12} + B_{12}) + (A_{21}+ B_{21}) + (A_{22} + B_{22}))
\\&=& \varphi(A_{11}) +\varphi( B_{11}) + \varphi(A_{12}) + \varphi(B_{12}) + \varphi(A_{21})
\\&+& \varphi(B_{21}) +\varphi(A_{22}) + \varphi(B_{22}) 
\\&=& (\varphi(A_{11}) +\varphi(A_{12}) +\varphi(A_{21})+\varphi(A_{22})) 
\\&+& (\varphi( B_{11}) + \varphi( B_{12}) + \varphi( B_{21}) + \varphi( B_{22}))
\\&=& \varphi(A_{11} + A_{12} + A_{21} + A_{22}) + \varphi(B_{11} + B_{12} + B_{21} + B_{22}) 
\\&=& \varphi(A) + \varphi(B).
\end{eqnarray*}

Now since $\varphi$ is additive it follows that 
$$\varphi(A + A^{*}) = \varphi(A) + \varphi(A^{*}).$$
Using the definition of $\varphi$ for $I_{\A}$ we have
\begin{eqnarray*}
2^{n-2}\varphi(A+A^{*}) = \varphi(2^{n-2}(A+A^{*})) &=& \varphi(q_{n*}(A,I_{\A}, ... , I_{\A})) \\&=& q_{n*}(\varphi(A),\varphi(I_{\A}), ... , \varphi(I_{\A})) \\&=& 2^{n-2}(\varphi(A) + \varphi(A)^{*}).
\end{eqnarray*}
We conclude that $2^{n-2}(\varphi(A) + \varphi(A^{*})) = 2^{n-2}\varphi(A + A^{*}) = 2^{n-2}(\varphi(A) + \varphi(A)^{*}).$ Thus $\varphi(A^{*}) = \varphi(A)^{*}$.

Observe that
\begin{eqnarray*}
2^{n-2}\varphi(AB - BA^{*})&=&\varphi(2^{n-2}(AB - BA^{*}))=\varphi(p_{n*} (I_{\A}, . . . ,I_{\A}, A, B)) \\&=& p_{n*}(\varphi(I_{\A}), ... , \varphi(I_{\A}), \varphi(A), \varphi(B))\\&=& 2^{n-2}(\varphi(A)\varphi(B) - \varphi(B)\varphi(A)^{*})
\end{eqnarray*}
and
\begin{eqnarray*}
2^{n-2}\varphi(AB + BA^{*})&=&\varphi(2^{n-2}(AB + BA^{*}))=\varphi(q_{n*} (I_{\A}, . . . ,I_{\A}, A, B)) \\&=& q_{n*}(\varphi(I_{\A}), ... , \varphi(I_{\A}), \varphi(A), \varphi(B))\\&=& 2^{n-2}(\varphi(A)\varphi(B) + \varphi(B)\varphi(A)^{*}),
\end{eqnarray*}
for all $A, B \in \A$. 
So we obtain that $\varphi(AB) = \varphi(A)\varphi(B)$ and this finishes the proof of Theorem \ref{mainthm}.

Note that prime $C^*$-algebras satisfy $(\dagger)$ hence we have the follow result

\begin{corollary} 
Let $\A$ be $C^*$-algebra and $\A'$ be prime $C^*$-algebra with identities $I_{\A}$ and $I_{\A'}$ respectively and $P_1$, $P_2 = I_{\A} - P_1$ nontrivial projections in $\A$. Then $\varphi: \A \rightarrow \A'$ is a bijective unital multiplicative $*$-Lie-Jordan $n$-map if only if $\varphi$ is a $*$-ring isomorphism.
\end{corollary}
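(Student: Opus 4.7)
The plan is to exploit the Peirce decomposition $\A = \A_{11} \oplus \A_{12} \oplus \A_{21} \oplus \A_{22}$ induced by $P_1, P_2$, and to use the two polynomial identities at special arguments as ``probes''. The crucial observations are that $p_{n*}(A, I_{\A}, \ldots, I_{\A}) = 2^{n-1} i\,\Im(A)$ and $q_{n*}(A, I_{\A}, \ldots, I_{\A}) = 2^{n-1}\Re(A)$, while substituting $P_k$ in place of the trailing $I_{\A}$'s gives linear combinations of the Peirce entries of $A$ together with their adjoints; for example, $p_{n*}(A, P_1, \ldots, P_1)$ produces a combination depending only on $A_{11}$ and $A_{21}$. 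Combined with the transfer Claim \ref{Claim2}, which lets us push a sum $\varphi(H) = \varphi(A) + \varphi(B)$ through the polynomials, injectivity of $\varphi$ then yields componentwise equations for the Peirce blocks of $H$.

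Building on this, I would prove additivity incrementally along the standard Peirce-style ladder: first for mixed pairs $A_{ii} + B_{ij}$, then cross-corner pairs $A_{12} + B_{21}$, then same-corner off-diagonal pairs $A_{ij} + B_{ij}$, and finally for the full decomposition $A_{11} + B_{12} + C_{21} + D_{22}$. Each step proceeds by choosing $H$ so that $\varphi(H) = \varphi(A) + \varphi(B) + \cdots$, applying the probes with $P_1$ and $P_2$, separating real and imaginary parts via the $p_{n*}$ and $q_{n*}$ identities, and reading off that each Peirce component of $H$ equals the sum of the corresponding components. In parallel I would record Lemma-style facts that $\varphi(P_i)$ is a projection and that $\varphi(\A_{ij}) = \A'_{ij}$; these come for free from the identities $q_{n*}(A, P_i, I_{\A},\ldots, I_{\A}) \pm p_{n*}(A, P_i, I_{\A}, \ldots, I_{\A}) = 2^{n-1} A P_i$ or $2^{n-1} P_i A$, which give $\varphi(AP_i) = \varphi(A)\varphi(P_i)$ and the symmetric identity.

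The main obstacle is additivity on the diagonal blocks $\A_{ii}$, since the probes used above cannot separate two purely diagonal elements: substituting $P_i$ leaves no information, and substituting $P_j$ kills $A_{ii}$ and $B_{ii}$. This is exactly where assumption $(\dagger)$ must enter. The strategy I have in mind is indirect: consider $X := \varphi(P_i A + P_i B) - \varphi(P_i A) - \varphi(P_i B)$, right-multiply by an arbitrary $\varphi(H_{ji})$, and use the already-established off-diagonal additivity together with the Lemma \ref{lema9}-type identities to show $X \varphi(H_{ji}) = 0$ for every $H_{ji} \in \A_{ji}$. Since $\varphi$ is surjective and $\A'_{ji} = Q_j \A' Q_i$, this means $X \A' \varphi(P_i) = \{0\}$, and $(\dagger)$ then forces $X = 0$; finally multiply on the right by $\varphi(P_i)$ to land inside $\A_{ii}$.

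Once additivity is complete, the $*$-preservation $\varphi(A^*) = \varphi(A)^*$ drops out of the $q_{n*}(A, I_{\A}, \ldots, I_{\A})$ identity: it gives $2^{n-2}\varphi(A + A^*) = 2^{n-2}(\varphi(A) + \varphi(A)^*)$, which compared with $\varphi(A + A^*) = \varphi(A) + \varphi(A^*)$ yields the claim. For multiplicativity, I would evaluate both polynomials with $n-2$ copies of $I_{\A}$ followed by $A, B$ to get $\varphi(2^{n-2}(AB - BA^*)) = 2^{n-2}(\varphi(A)\varphi(B) - \varphi(B)\varphi(A)^*)$ and the analogous relation with a $+$ sign; adding the two equations, the $BA^*$ terms cancel and additivity produces $\varphi(AB) = \varphi(A)\varphi(B)$, finishing the theorem.
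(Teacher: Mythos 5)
Your proposal is, in substance, the paper's own proof of Theorem \ref{mainthm} replayed in outline: the same Peirce ladder (Lemmas \ref{lema4}--\ref{lema8}) driven by Claim \ref{Claim2} and injectivity, the same identities $q_{n*}(A,P_i,I_{\A},\ldots,I_{\A}) \pm p_{n*}(A,P_i,I_{\A},\ldots,I_{\A})$ yielding $\varphi(AP_i)=\varphi(A)\varphi(P_i)$ and $\varphi(\A_{ij})=\A'_{ij}$ (Lemmas \ref{lema9} and \ref{lema11}), the same $(\dagger)$-based argument for diagonal additivity via $X\varphi(H_{ji})=0$ (Lemma \ref{lema12}), and the same closing computations with $n-2$ copies of $I_{\A}$ for $*$-preservation and multiplicativity. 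All of this is correct and matches the paper step for step.

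What is missing is the one piece of content specific to this corollary: its hypothesis is that $\A'$ is \emph{prime}, not that it satisfies $(\dagger)$, yet you invoke $(\dagger)$ as though it were assumed. You need the bridge: since $\varphi$ is unital and injective, Claim \ref{Claim1} gives $\varphi(0)=0$, and since $P_i$ is nontrivial, $\varphi(P_i)\neq 0$; primeness of $\A'$ then converts $X\A'\varphi(P_i)=\left\{0\right\}$ into $X=0$, i.e.\ $(\dagger)$ holds. The paper's entire proof of the corollary consists of exactly this observation followed by an appeal to Theorem \ref{mainthm}; once you add that one line, your argument is complete, and indeed you could then simply cite the theorem rather than reprove it.
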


\begin{remark}
Observe that if $\varphi$ satisfies only
$$\varphi(p_{n*}(A, P, ..., P)) = p_{n*}(\varphi(A), \varphi(P), ..., \varphi(P))$$
and
$$\varphi(q_{n*}(A, P, ..., P)) = q_{n*}(\varphi(A), \varphi(P), ..., \varphi(P)),$$
for all $A \in \A$ and $P \in \left\{P_1, I_{\A} - P_1\right\}$ then

$$\varphi(p_{n*}(A, I_{\A}, ..., I_{\A})) = p_{n*}(\varphi(A), \varphi(I_{\A}), ..., \varphi(I_{\A}))$$
and
$$\varphi(q_{n*}(A, I_{\A}, ..., I_{\A})) = q_{n*}(\varphi(A), \varphi(I_{\A}), ..., \varphi(I_{\A})),$$
respectively, just by using linearity of $p_{n*}$ and $q_{n*}$ to $I_{\A} = P_1 + P_2$.
\end{remark}

In view of this follows the result
\begin{theorem}
Let $\A$ and $\A'$ be two  
$C^*$-algebras with identities $I_{\A}$ and $I_{\A'}$, respectively, and $P_1$ and $P_2 = I_{\A} - P_1$ nontrivial projections in $\A$. 
Suppose that $\A'$ satisfies:
\begin{eqnarray*}
  \left(\dagger\right) \ \ \ \mbox{If} \ \ \ X \A' \varphi(P_i) = \left\{0\right\} \ \ \  \mbox{implies} \ \ \ X = 0.
\end{eqnarray*}
If $\varphi: \A \rightarrow \A'$ is a bijective unital map which satisfies
\begin{eqnarray*}
 &&\varphi(p_{n*}(A, P, ..., P)) = p_{n*}(\varphi(A), \varphi(P), ..., \varphi(P))
\\&and&
\\&&\varphi(q_{n*}(A, P, ..., P)) = q_{n*}(\varphi(A), \varphi(P), ..., \varphi(P)),
\end{eqnarray*}
 for all $A \in \A$ and $P \in \left\{P_1, P_2\right\}$ then $\varphi$ is $*$-additive.
\end{theorem}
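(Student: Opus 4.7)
The plan is to mirror the argument of Theorem~\ref{mainthm}, carefully tracking which of its auxiliary results remain valid under the weaker hypothesis. By the preceding Remark, the identity for $p_{n*}$ and $q_{n*}$ is also available when $T_{2}=\cdots=T_{n}=I_{\A}$; together with the assumed identities for $P\in\{P_{1},P_{2}\}$, this yields $\varphi(0)=0$ (the argument of Claim~\ref{Claim1}) and the Claim~\ref{Claim3} identities $\varphi(2^{n-1}\Re(A))=2^{n-1}\Re(\varphi(A))$ and $\varphi(2^{n-1}i\Im(A))=2^{n-1}i\Im(\varphi(A))$. Crucially, the proofs of Lemmas~\ref{lema4},~\ref{lema5},~\ref{lema6} and~\ref{lema8} only invoke Claim~\ref{Claim2} with every later argument equal to a single projection $P_{i}$, so they carry over unchanged, yielding additivity across the four Peirce blocks and within each off-diagonal block $\A_{ij}$ for $i\neq j$.

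Next I would establish the analog of Lemma~\ref{lema11}, namely $\varphi(\A_{ij})=\A'_{ij}$ where $Q_{i}:=\varphi(P_{i})$ and $\A'_{ij}:=Q_{i}\A'Q_{j}$. The $I_{\A}$-identity applied to $A=P_{i}$ gives $Q_{i}^{*}=Q_{i}$, while comparing $\varphi(2^{n-1}P_{i})$ computed via the $P_{i}$ hypothesis against the $I_{\A}$ one yields $Q_{i}^{n}=Q_{i}$. Combined with $Q_{1}+Q_{2}=\varphi(I_{\A})=I_{\A'}$, which follows from Lemma~\ref{lema8}, a short spectral calculation forces $\sigma(Q_{i})\subseteq\{0,1\}$, so each $Q_{i}$ is a projection. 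Then applying $p_{n*}$ and $q_{n*}$ with $P_{j}$ ($j\neq i$) to $A_{ii}\in\A_{ii}$ kills most Peirce components of $\varphi(A_{ii})$; the remaining potential component in $\A'_{ji}$ is ruled out either by a direct coefficient comparison in the Peirce formula when $n\geq 3$, or by first verifying that $\varphi$ sends self-adjoint and skew-adjoint elements of $\A_{ii}$ into $\A'_{ii}$ and then using $\varphi(A_{ii})=\Re(\varphi(A_{ii}))+i\Im(\varphi(A_{ii}))$ via Claim~\ref{Claim3}.

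The heart of the argument is the substitute for Lemma~\ref{lema12}, namely additivity within each $\A_{ii}$. Given $A_{ii},B_{ii}\in\A_{ii}$ and $H$ with $\varphi(H)=\varphi(A_{ii})+\varphi(B_{ii})$, applying Claim~\ref{Claim2} with $T_{k}=P_{j}$ ($j\neq i$) and using that $p_{n*}(A_{ii},P_{j},\ldots,P_{j})=q_{n*}(A_{ii},P_{j},\ldots,P_{j})=0$ forces $H_{jj}=H_{ij}=0$; matching Peirce components of $\varphi(H)\in\A'_{ii}$ via Lemma~\ref{lema4} then eliminates $H_{ji}$, so $H\in\A_{ii}$. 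The final identification $H=A_{ii}+B_{ii}$ is obtained as in Lemma~\ref{lema12}: verify that $(\varphi(A_{ii}+B_{ii})-\varphi(H))\A'Q_{j}=\{0\}$ and invoke $(\dagger)$. This rests on a partial multiplicativity $\varphi(C_{ii}M_{ij})=\varphi(C_{ii})\varphi(M_{ij})$ for $C_{ii}\in\A_{ii}$ and $M_{ij}\in\A_{ij}$, which is the genuine substitute for Lemma~\ref{lema9} and which I expect to be the main obstacle; my plan for it is to apply $p_{n*}$ and $q_{n*}$ with $P_{i}$ and with $P_{j}$ to the element $A:=C_{ii}+M_{ij}$, expand via the Peirce formulas, and combine with Lemma~\ref{lema4} and the already-established $*$-preservation on the off-diagonal blocks to isolate the product.

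Once additivity on each $\A_{ij}$ is in place, global additivity $\varphi(A+B)=\varphi(A)+\varphi(B)$ follows exactly as in the final computation of Theorem~\ref{mainthm}'s proof. Finally, $*$-preservation is immediate from the $I_{\A}$-identity $\varphi(2^{n-2}(A+A^{*}))=2^{n-2}(\varphi(A)+\varphi(A)^{*})$ combined with the now-available $\varphi(A+A^{*})=\varphi(A)+\varphi(A^{*})$: these force $\varphi(A^{*})=\varphi(A)^{*}$, completing the proof that $\varphi$ is $*$-additive.
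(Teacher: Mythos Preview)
The paper offers no explicit proof here: it states the theorem immediately after the Remark with the phrase ``In view of this follows the result,'' treating it as a direct consequence of the proof of Theorem~\ref{mainthm}. You go further than the paper by recognizing that Lemma~\ref{lema9} invokes the defining identity at the mixed tuple $(A,P_i,I_{\A},\ldots,I_{\A})$, which for $n\ge 3$ is \emph{not} supplied by the weaker hypothesis (nor by the Remark, which only yields the all-$I_{\A}$ case). Consequently Lemma~\ref{lema12}---additivity on the diagonal blocks $\A_{ii}$---does not carry over verbatim, and you correctly flag this as the main obstacle.

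Your proposed workaround, however, has a genuine gap. The plan is to recover a partial multiplicativity $\varphi(C_{ii}M_{ij})=\varphi(C_{ii})\varphi(M_{ij})$ by applying the hypothesis to $A:=C_{ii}+M_{ij}$; but a direct computation gives
\[
p_{n*}(C_{ii}+M_{ij},P_i,\ldots,P_i)=2^{n-2}(C_{ii}-C_{ii}^{*}),\qquad
q_{n*}(C_{ii}+M_{ij},P_i,\ldots,P_i)=2^{n-2}(C_{ii}+C_{ii}^{*}),
\]
while with $P_j$ one obtains $M_{ij}\mp M_{ij}^{*}$. None of these contain the product $C_{ii}M_{ij}$. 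More generally, the only multiplication occurring in $p_{n*}(A,P,\ldots,P)$ or $q_{n*}(A,P,\ldots,P)$ is multiplication by the projection $P$ itself, so every output is a linear combination of Peirce components of $A$ and $A^{*}$; a product of two independent elements of $\A$ cannot be manufactured this way. Hence your substitute for Lemma~\ref{lema9} does not follow from the sketch, and without it the passage through condition~$(\dagger)$ to Lemma~\ref{lema12} breaks down. Additivity on $\A_{ii}$---and therefore the conclusion---remains unproven along this route.
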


As an application of Theorem \ref{mainthm} on von Neumann algebras, we have the following corollary 

\begin{corollary}
Let $\mathcal{M}_{\A}$ be a von Neumann algebra relative $C^{*}$-algebra $\A$ without central summands of type $I_1$. Then $\varphi: \mathcal{M}_{\A} \rightarrow \mathcal{M}_{\A}$ is a bijective unital multiplicative $*$-Lie-Jordan $n$-map if and only if $\varphi$ is a $*$-ring isomorphism.
\end{corollary}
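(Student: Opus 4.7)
The plan is to deduce this corollary from Theorem \ref{mainthm} with $\A = \A' = \mathcal{M}_{\A}$. The ``if'' direction is routine, so the task is to produce a nontrivial projection $P_{1} \in \mathcal{M}_{\A}$ (with $P_{2}=I-P_{1}$) for which condition $(\dagger)$ is satisfied by both $\varphi(P_{1})$ and $\varphi(P_{2})$. The structural hypothesis enters through the classical fact that a von Neumann algebra with no central summand of type $I_{1}$ admits a projection $R$ with $R \neq 0,I$ and central carrier $Z(R)=Z(I-R)=I$.

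The key move is to pick $P_{1}:=\varphi^{-1}(R)$ and $P_{2}:=I-P_{1}$. By the remark following Lemma \ref{lema9}, $\varphi^{-1}$ preserves projections, so $P_{1}$ is a nontrivial projection in $\mathcal{M}_{\A}$. Observe that Lemmas \ref{lema4}--\ref{lema9} and \ref{lema11}, whose proofs do not invoke $(\dagger)$, apply to this choice. The orthogonality $\varphi(P_{1})\varphi(P_{2})=\varphi(P_{1}P_{2})=0$ (from Lemma \ref{lema9}) together with the Peirce-type decomposition $\mathcal{M}_{\A}=\sum_{i,j}Q_{i}\mathcal{M}_{\A}Q_{j}$ supplied by Lemmas \ref{lema8} and \ref{lema11} forces $Q_{1}+Q_{2}=I$, so $\varphi(P_{2})=I-R$. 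Consequently both $\varphi(P_{1})=R$ and $\varphi(P_{2})=I-R$ carry full central carrier.

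It then remains to verify $(\dagger)$ for a projection $Q\in\mathcal{M}_{\A}$ with $Z(Q)=I$: if $X\mathcal{M}_{\A}Q=\{0\}$, then $X$ annihilates the closed subspace $\overline{\mathcal{M}_{\A}Q\mathcal{H}}=Z(Q)\mathcal{H}=\mathcal{H}$, whence $X=0$. Theorem \ref{mainthm} then delivers the $*$-ring isomorphism. The principal obstacle is that $\varphi$ is not known to be additive at the outset, so there is no direct way to control the central carrier of $\varphi(P_{i})$ from properties of $P_{i}$ alone; the pullback trick $P_{1}:=\varphi^{-1}(R)$ neatly circumvents this by fixing $\varphi(P_{1})$ by hand to be a preselected projection with the desired central-carrier property.
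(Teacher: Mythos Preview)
Your approach is correct and, in fact, more careful than the paper's own argument. The paper simply cites \cite{Bai} for the existence of a projection $P_1$ in $\mathcal{M}_{\A}$ such that $X\mathcal{M}_{\A}P_i=\{0\}$ forces $X=0$ for $i=1,2$, and then invokes Theorem~\ref{mainthm} directly. But condition $(\dagger)$ in Theorem~\ref{mainthm} concerns $\varphi(P_i)$, not $P_i$; since $\varphi$ is not yet known to be additive or multiplicative, there is no a priori reason why $\varphi(P_i)$ should inherit full central carrier from $P_i$. The paper does not address this point. Your pullback trick $P_1:=\varphi^{-1}(R)$, with $R$ preselected so that $Z(R)=Z(I-R)=I$, legitimately closes this gap and is a genuine improvement on the paper's argument.

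One small caution: the remark after Lemma~\ref{lema9} that you invoke does not literally show that $\varphi^{-1}$ sends an \emph{arbitrary} projection to a projection; as written it only concerns the distinguished pair $P_1,P_2$. To make your argument airtight, observe explicitly that $\varphi^{-1}$ is itself a bijective unital multiplicative $*$-Lie--Jordan $n$-map, and run Lemmas~\ref{lema4}--\ref{lema9} (which indeed do not use $(\dagger)$) for $\varphi^{-1}$ with the pair $R,\,I-R$ playing the role of $P_1,P_2$. This yields directly that $\varphi^{-1}(R)$ is a nontrivial projection, and the same Peirce argument you sketch (now applied to $\varphi^{-1}$) gives $\varphi^{-1}(R)+\varphi^{-1}(I-R)=I$, so that $P_2=\varphi^{-1}(I-R)$ and hence $\varphi(P_2)=I-R$ without a second pass through the lemmas for $\varphi$. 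With that adjustment your proof is complete and your diagnosis of the obstacle is exactly right.
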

\begin{proof}
Let be $\mathcal{M}_{\A}$ the von Neumann algebra relative alternative $C^{*}$-algebra $\A$.
It is shown in \cite{Bai} that, if a von Neumann algebra $\mathcal{M}$ has no central summands of type $I_1$  $\left( = \mbox{central abelian projection} \right)$, then $\mathcal{M}$ satisfies the following assumption: 
\begin{itemize}
\item $X \mathcal{M}_{\A}P_i = \left\{0\right\} \Rightarrow X = 0$.
\end{itemize}
Now, by Theorem \ref{mainthm}, the corollary is true.
\end{proof}

To conclude this paper, we conjecture that on mild conditions in $C^{*}$-algebras every multiplicative $*$-Lie-type maps and multiplicative $*$-Jordan-type maps are $*$-ring isomorphism.

\end{document}